\newcommand{\BD}{\operatorname{bd}}
\newcommand{\DIAM}{\operatorname{diam}}
\newcommand{\INFDIAM}{\operatorname{infdiam}}
\newcommand{\SKEL}{\operatorname{skel}}
\newcommand{\ZE}{\mathbb{Z}^2_{\rm even}}
\newtheorem{theorem}{Theorem}
\newtheorem{corollary}[theorem]{Corollary}
\newtheorem{lemma}[theorem]{Lemma}
\newtheorem{problem}[theorem]{Problem}
\newtheorem{proposition}[theorem]{Proposition}
\begin{document}


\title[Tilings of convex sets by incongruent equilateral triangles]{Tilings of convex sets by mutually incongruent equilateral triangles contain arbitrarily small tiles}

\author{Christian Richter}
\address{Institute of Mathematics, Friedrich Schiller University,
D-07737 Jena, Germany}
\email{christian.richter@uni-jena.de}
\author{Melchior Wirth}
\address{Institute of Mathematics, Friedrich Schiller University,
D-07737 Jena, Germany}
\email{melchior.wirth@uni-jena.de}

\date{\today}

\begin{abstract}
We show that every tiling of a convex set in the Euclidean plane $\mathbb{R}^2$ by equilateral triangles of mutually different sizes contains arbitrarily small tiles. The proof is purely elementary up to the discussion of one family of tilings of the full plane $\mathbb{R}^2$, which is based on a surprising connection to a random walk on a directed graph.
\end{abstract}
\subjclass[2000]{52C20 (primary), 05C81, 51M20, 60J10 (secondary).}
\keywords{Perfect tiling, equilateral triangle, convex set, random walk, recurrent Markov chain.}

\maketitle


\section{Introduction and main result}

\emph{Can the Euclidean plane $\mathbb{R}^2$ be tiled by equilateral triangles of pairwise different sizes?} This problem, posed in \cite[Exercise~2.4.10]{gruenbaum1987} and \cite[Section~C11]{croft1991}, serves as the motivation for the present paper.

A \emph{tiling} of a set $A \subseteq \mathbb{R}^2$ is a family $\mathcal{T}=\{T_i:i \in I\}$ of subsets $T_i \subseteq A$ with mutually disjoint interiors such that $A = \bigcup \mathcal{T}= \bigcup_{i \in I} T_i$. The elements $T_i$ of $\mathcal{T}$ are called \emph{tiles}. We refer to \cite{gruenbaum1987} for a comprehensive survey on tilings in $\mathbb{R}^2$. Here we study tilings of convex sets $C$ by equilateral triangles $T_i$. The size of a triangle $T$ is measured by its \emph{diameter} $\DIAM(T)= \sup_{x,y \in T} \|x-y\|$, i.e., by its longest side length.
We are interested in the case when all tiles have pairwise different sizes. Such tilings by equilateral triangles are called \emph{perfect}. This name is adopted from perfect tilings by squares \cite{brooks1940}. 

Figure~\ref{fig_1} illustrates a family of tilings of the full plane $\mathbb{R}^2$ by equilateral triangles depending on a parameter $\alpha \in \left(0,\frac{1}{2}\right]$. These tilings are not perfect since there are only three (if $\alpha < \frac{1}{2}$) or two (if $\alpha = \frac{1}{2}$) different sizes of tiles. The limit case $\alpha \downarrow 0$ (with triangles of size $0$ being ignored) would give the hexagonal tiling by triangles of unit size.
\begin{figure}
\begin{center}
\begin{tikzpicture}[xscale=.4,yscale=.693]

\draw[line width=.5mm,densely dotted,->]
  (0,0) -- (5,1)
  ;

\draw[line width=.5mm,densely dotted,->]
  (0,0) -- (1,3)
  ;

\draw
  (-.5,-.5) -- (24.5,-.5) -- (24.5,11.5) -- (-.5,11.5) -- cycle

  (0,0) -- (6,0) -- (3,3) -- cycle 
  (5,1) -- (11,1) -- (8,4) -- cycle
  (10,2) -- (16,2) -- (13,5) -- cycle 
  (15,3) -- (21,3) -- (18,6) -- cycle
  (1,3) -- (7,3) -- (4,6) -- cycle 
  (6,4) -- (12,4) -- (9,7) -- cycle
  (11,5) -- (17,5) -- (14,8) -- cycle 
  (16,6) -- (22,6) -- (19,9) -- cycle
  (2,6) -- (8,6) -- (5,9) -- cycle 
  (7,7) -- (13,7) -- (10,10) -- cycle
  (12,8) -- (18,8) -- (15,11) -- cycle 
  (14,0) -- (20,0) -- (17,3) -- cycle

  (-.5,.5) -- (.5,-.5)
  (1.5,-.5) -- (2,0) -- (2.5,-.5)
  (5.5,-.5) -- (7,1) -- (8.5,-.5)
  (9.5,-.5) -- (12,2) -- (14.5,-.5)
  (15.5,-.5) -- (16,0) -- (16.5,-.5)
  (19.5,-.5) -- (21,1) -- (22.5,-.5) 
  (23.5,-.5) -- (24.5,.5) 
  (24.5,1) -- (19,1) -- (22,4) -- (24.5,1.5)
  (24.5,2) -- (24,2) -- (24.5,2.5)
  (24.5,4) -- (20,4) -- (23,7) -- (24.5,5.5)
  (24.5,7) -- (21,7) -- (24,10) -- (24.5,9.5)
  (24.5,10) -- (22,10) -- (23.5,11.5)
  (20.5,11.5) -- (23,9) -- (17,9) -- (19.5,11.5)
  (18.5,11.5) -- (19,11) -- (13,11) -- (13.5,11.5)
  (12.5,11.5) -- (14,10) -- (8,10) -- (9.5,11.5)
  (6.5,11.5) -- (9,9) -- (3,9) -- (5.5,11.5)
  (4.5,11.5) -- (5,11) -- (-.5,11)
  (-.5,9.5) -- (1,11) -- (4,8) -- (-.5,8)
  (-.5,7.5) -- (0,8) -- (3,5) -- (-.5,5)
  (-.5,4.5) -- (2,2) -- (-.5,2)

  (3,1) node {$1$}
  (1,-.33) node {$\alpha$}
  (8,2) node {$1$}
  (6,.67) node {$\alpha$}
  (9,.34) node {$1-\alpha$}
  (13,3) node {$1$}
  (11,1.67) node {$\alpha$}
  (14,1.34) node {$1-\alpha$}
  (18,4) node {$1$}
  (16,2.67) node {$\alpha$}
  (19,2.34) node {$1-\alpha$}

  (4,4) node {$1$}
  (2,2.67) node {$\alpha$}
  (5,2.34) node {$1-\alpha$}
  (9,5) node {$1$}
  (7,3.67) node {$\alpha$}
  (10,3.34) node {$1-\alpha$}
  (14,6) node {$1$}
  (12,4.67) node {$\alpha$}
  (15,4.34) node {$1-\alpha$}
  (19,7) node {$1$}
  (17,5.67) node {$\alpha$}
  (20,5.34) node {$1-\alpha$}

  (5,7) node {$1$}
  (3,5.67) node {$\alpha$}
  (6,5.34) node {$1-\alpha$}
  (10,8) node {$1$}
  (8,6.67) node {$\alpha$}
  (11,6.34) node {$1-\alpha$}
  (15,9) node {$1$}
  (13,7.67) node {$\alpha$}
  (16,7.34) node {$1-\alpha$}
  (20,10) node {$1$}
  (18,8.67) node {$\alpha$}
  (21,8.34) node {$1-\alpha$}

  (12,0) node {$1$} 
  (17,1) node {$1$} 
  (22,2) node {$1$} 
  (23,5) node {$1$} 
  (24,8) node {$1$} 
  (0,6) node {$1$} 
  (1,9) node {$1$} 
  (6,10) node {$1$} 
  (11,11) node {$1$} 

  (15,-.33) node {$\alpha$}
  (20,.67) node {$\alpha$}
  (21,3.67) node {$\alpha$}
  (22,6.67) node {$\alpha$}
  (23,9.67) node {$\alpha$}
  (4,8.67) node {$\alpha$}
  (9,9.67) node {$\alpha$}
  (14,10.67) node {$\alpha$}

  (23,.34) node {$1-\alpha$}
  (1,4.34) node {$1-\alpha$}
  (2,7.34) node {$1-\alpha$}
  (3,10.34) node {$1-\alpha$}
  (7,8.34) node {$1-\alpha$}
  (12,9.34) node {$1-\alpha$}
  (17,10.34) node {$1-\alpha$}
  ;
  
\end{tikzpicture}
\end{center}
\caption{A family of tilings of $\mathbb{R}^2$ with parameter $0 < \alpha \le \frac{1}{2}$. 
The numbers stand for sizes of tiles. Arrows indicate periodicity.}
\label{fig_1}
\end{figure}
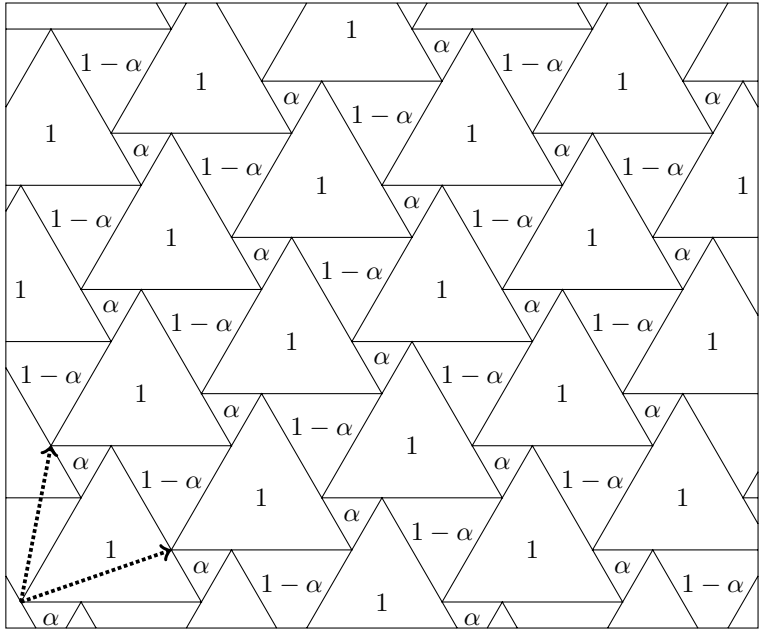

The property of perfectness is rather restrictive. Tutte \cite[p.\ 468]{tutte1948} showed that there is no perfect tiling $\mathcal{T}$ of an equilateral triangle $T$ by finitely many equilateral triangles apart from the trivial one $\mathcal{T}=\{T\}$. This was generalized to tilings of bounded convex sets by Buchman \cite{buchman1981} and by Tuza \cite{tuza1991} (see also \cite[Theorem~6]{kupavskii2017}). Scherer \cite{scherer1983} complemented this by a negative result on tilings 
of $\mathbb{R}^2$ and motivated the above introductory question.
\begin{theorem}\label{thm_old}
{\bf (a) } There is no convex subset of $\mathbb{R}^2$ that possesses a perfect tiling by finitely many and at least two equilateral triangles \cite{buchman1981, tuza1991}.

{\bf (b) } A perfect tiling of the plane $\mathbb{R}^2$ by equilateral triangles does not contain a smallest tile \cite{scherer1983}.
\end{theorem}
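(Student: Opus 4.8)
The plan is to prove both parts at once by studying a tile of minimal diameter, since each statement amounts to the nonexistence of such a tile. Suppose, for contradiction, that a perfect tiling $\mathcal{T}$ of a convex set $C$ --- bounded in case (a), equal to $\mathbb{R}^2$ in case (b) --- contains a tile $t$ with $\DIAM(t)=\sigma$ minimal among all tiles. A minimal tile forces $\mathcal{T}$ to be locally finite: any bounded region meets only finitely many tiles, because each tile has area at least a fixed constant times $\sigma^2$. After a rotation of the whole plane we may assume that $t$ has a horizontal bottom side and apex above; call such triangles \emph{upward} and their mirror images \emph{downward}. A tile sharing a segment of positive length with an edge of $t$ must have one of its own edges on the line through that edge of $t$, and an equilateral triangle with one edge in a given line has all its edges in the three standard directions determined by that line; hence every such neighbour of $t$ is again upward or downward.

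First I would establish an orientation lemma: every tile flush against an edge of $t$ from the outside is downward, and therefore strictly larger than $t$ by perfectness. Indeed, a tile flush against the bottom side of $t$ from below would need a horizontal side with the tile lying underneath it, which is impossible for an upward triangle; the two slanted sides of $t$ (slopes $\pm\sqrt{3}$) are handled the same way by checking on which side of the line the triangle sits. Consequently the strip just below the bottom side of $t$ is covered by downward tiles; a short bookkeeping argument shows that one of the two bottom corners of $t$ lies in the relative interior of the top side of one of them, so, reflecting if necessary, the bottom-left corner $v$ of $t$ is covered on its lower side by a downward tile that occupies the whole lower half-plane near $v$.

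Next I would analyse the configuration at $v$ by an angle count. The tile $t$ fills a sector of $60^{\circ}$ at $v$ and the overhanging downward neighbour fills the lower half-plane, a sector of $180^{\circ}$, so the remaining $120^{\circ}$ must be tiled. Going counterclockwise from $t$, the next tile is a downward neighbour of $t$ across its left edge; if its apex were not exactly at $v$ it would spill into the lower half-plane already used, so its apex is at $v$ and it contributes exactly $60^{\circ}$; the last $60^{\circ}$ must then be filled by a single tile with a corner at $v$, and checking the corner angles of the two orientations shows this tile is upward. Thus $v$ produces a new upward tile $t'$, larger than $t$, lying in the same row along the horizontal line through the bottom side of $t$, with a downward tile wedged between $t$ and $t'$. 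Iterating at the outer corners of $t'$, and running the analogous analysis off the other two edges of $t$, forces an ever-growing rigid pattern: rows of successively determined upward triangles resting on common lines, with downward triangles wedged between and beneath them, all of diameter greater than $\sigma$ and with every incidence prescribed.

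Finally I would extract the contradiction. In case (a) the set $C$ is bounded and convex, so each forced row is finite; following it to its two ends and combining it with the forced structures emanating from the remaining edges of $t$ yields a partial configuration that cannot be completed inside a bounded convex set without repeating a tile size --- this is the substance of the proofs of Buchman \cite{buchman1981} and Tuza \cite{tuza1991} (see also \cite[Theorem~6]{kupavskii2017}). In case (b) there is no boundary to run into, so one argues instead that the forced pattern propagates in both directions without end and that the rigid metric relations it imposes along a row eventually demand a tile of diameter at most $\sigma$, contradicting minimality --- this is Scherer's argument \cite{scherer1983}. I expect the real work, and the main obstacle, to lie in these last two steps: the exhaustive case analysis of how a minimal tile can be surrounded (which corners overhang, and whether further tiles intrude near $v$), together with, in the plane case, squeezing a contradiction out of an infinite forced configuration rather than out of the boundary of $C$.
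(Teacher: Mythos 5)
There is a genuine gap: at the decisive moment your argument is not an argument. After the local analysis around the minimal tile $t$ you write that the forced pattern ``cannot be completed inside a bounded convex set without repeating a tile size --- this is the substance of the proofs of Buchman and Tuza'' and that in the plane ``the rigid metric relations \dots\ eventually demand a tile of diameter at most $\sigma$ --- this is Scherer's argument.'' These two sentences are exactly the content of Theorem~\ref{thm_old}(a) and (b); deferring them to the cited papers means the statement is not proved, and you acknowledge yourself that the real work lies there. Moreover, the way you phrase the missing step is misleading: the contradiction cannot come from the combinatorial propagation alone. The tilings of Figure~\ref{fig_1} have a smallest tile, contain no two tiles sharing a side, and exhibit precisely the local corner configuration you derive at $v$ (a big downward tile overhanging the corner, a downward tile wedged in, a new upward tile in the same row); they are only ruled out because they are not perfect. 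So any correct completion must extract a \emph{quantitative} consequence of perfectness, not merely an ``ever-growing rigid pattern.'' In the paper this quantitative mechanism is Lemma~\ref{lem_2}: the corner configuration you found is essentially an $E$-configuration, and each $E$-configuration forces another one whose length is shorter by at least $\INFDIAM(\mathcal{T})$, whence $\INFDIAM(\mathcal{T})=0$; the $E$-configuration-free case is then classified separately (Proposition~\ref{prop_1} for bounded $C$, and Proposition~\ref{prop_2}, via the recurrence of a random walk and constancy of non-negative harmonic functions, for $C=\mathbb{R}^2$). Your sketch contains the germ of the $E$-configuration idea but not the decreasing-length (or any substitute) mechanism, which is the missing idea.

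Two smaller points. First, the paper itself does not reprove Theorem~\ref{thm_old}; it is quoted from \cite{buchman1981,tuza1991,scherer1983} and rederived as a corollary of Theorem~\ref{thm_main}, so a self-contained proof along your lines would have to stand entirely on its own rather than lean on those references. Second, in case (a) your analysis below the bottom edge of $t$ and the $360^{\circ}$ angle count at $v$ presuppose that these edges and corners lie in the interior of $C$; a minimal tile may sit on the boundary of the bounded convex set, and this boundary interaction (handled in the paper by the polygon case analysis of Proposition~\ref{prop_1}) is not addressed. Your opening reduction (a smallest tile forces local finiteness, and tiles sharing a segment with an edge of $t$ are aligned and, by perfectness, strictly larger) is sound and matches the spirit of Lemma~\ref{lem_1}, but everything after it would need to be rebuilt around an explicit contradiction mechanism.
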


We shall show the following result, thereby answering a question by Nandakumar \cite[4.~Note~2]{nandacumar2016}.
\begin{theorem}\label{thm_main}
If a perfect tiling of a convex subset of the plane by equilateral triangles consists of at least two tiles, then it contains arbitrarily small tiles.
\end{theorem}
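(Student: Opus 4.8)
The plan is to argue by contradiction. Suppose that $\mathcal T=\{T_i:i\in I\}$ is a perfect tiling of a convex set $C\subseteq\mathbb R^2$ by at least two equilateral triangles and that $\delta:=\inf_{i\in I}\DIAM(T_i)>0$; after a homothety we may assume $\delta=1$, so every tile has side length at least $1$ while there are tiles of side length arbitrarily close to $1$. If $C$ were bounded it would have finite area, and since each tile has area at least $\tfrac{\sqrt3}{4}$ the tiling would then be finite, contradicting Theorem~\ref{thm_old}(a); hence $C$ is unbounded. If moreover the value $1$ were attained there would be a smallest tile, which in the case $C=\mathbb R^2$ already contradicts Theorem~\ref{thm_old}(b); so for that case we may additionally assume the infimum is not attained --- a hint that the obstructing configurations appear only in a limit.

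The elementary core of the argument is a rigidity analysis of the skeleton $\SKEL(\mathcal T):=\bigcup_{i\in I}\BD(T_i)$. Since any two tiles overlapping in more than one point share a segment and therefore have edges along the three directions determined by that segment, a propagation argument shows that all tile edges lie in just three fixed directions, pairwise $60^\circ$ apart. Moreover, at any vertex of $\partial C$ the incident tile-corners all have angle $\tfrac{\pi}{3}$ and sum to the interior angle, so the latter is $\tfrac{\pi}{3}$ or $\tfrac{2\pi}{3}$. Thus $\partial C$ is a convex polygonal curve with edges in three fixed directions and interior angles in $\{\tfrac{\pi}{3},\tfrac{2\pi}{3}\}$, which forces $C$ to be the plane, a half-plane, a strip, a wedge of angle $\tfrac{\pi}{3}$ or $\tfrac{2\pi}{3}$, or one of a few truncated or beveled variants of these; for each of these other than the plane, $\partial C$ supplies enough extra structure to reach a contradiction by elementary means, so the crux is $C=\mathbb R^2$. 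There one studies the tiles straddling a maximal segment (a ``fault line'') of $\SKEL(\mathcal T)$ and --- as in the classical arguments behind Theorem~\ref{thm_old} --- the tiles of near-minimal size; since no two tiles are congruent and every side length is at least $1$, the whole tiling then turns out to consist of successive horizontal layers, with the transition from each layer to the next essentially forced. This reduces the plane case to showing that no member of one explicit family of tilings of $\mathbb R^2$, of which the tilings of Figure~\ref{fig_1} form a visible sub-family, is perfect.

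To exclude perfectness in that family I would invoke the random walk announced in the abstract. One encodes the layer-to-layer dynamics of the family by a directed graph whose vertices record the finite local data along a fault line --- the orientations of the straddling triangles together with their normalized offsets --- and whose arcs, suitably weighted, describe how one fault-line configuration is succeeded by the next, in the spirit of the classical correspondence between squared rectangles and electrical networks \cite{brooks1940}. The hypothesis that the tiles be mutually incongruent becomes the statement that this sequence of configurations never exactly repeats. One then argues that the associated Markov chain is recurrent, so that configurations --- and, tracking the evolving offset parameter, eventually the entire layer data --- must recur; this forces the sequence of layers, and hence the sequence of tile sizes, to be periodic, contradicting perfectness. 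The resulting contradiction shows that $\delta=0$, i.e.\ the tiling contains arbitrarily small tiles.

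The main obstacle, and the only genuinely non-elementary point, is this last step: setting up the directed graph so that the geometric matching conditions along a fault line are captured faithfully, identifying ``mutually incongruent'' with the correct recurrence/transience dichotomy, and --- hardest of all --- proving that the chain is recurrent. It is the recurrence of this Markov chain that rules out the perfect relatives of the tilings of Figure~\ref{fig_1}; by comparison the book-keeping in the elementary reduction, though lengthy, is routine.
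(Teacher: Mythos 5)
There are two genuine gaps, and they sit exactly at the places your outline leans on hardest. First, the structural reduction in the plane case is asserted rather than proved: you claim that studying the tiles straddling a fault line and the tiles of near-minimal size forces the tiling into ``successive horizontal layers'' whose transitions are ``essentially forced'', and you call the remaining book-keeping routine. But a tiling of $\mathbb{R}^2$ with all sides $\ge 1$ need not be layered at all, and ruling out the non-layered ones is the heart of the matter. The paper does this with a dichotomy you do not have: it isolates \emph{$E$-configurations} in the skeleton and proves (Lemma~\ref{lem_2}, Scherer's descent) that any locally finite tiling without two side-sharing tiles (automatic for perfect tilings) that contains an $E$-configuration must contain $E$-configurations of lengths decreasing by at least $\INFDIAM(\mathcal{T})$ at each step, forcing $\INFDIAM(\mathcal{T})=0$; only in the complementary case (no $E$-configuration) does a careful local analysis of maximal segments (Lemma~\ref{lem_3}) force the Figure~\ref{fig_1} topology, and the same mechanism disposes of the unbounded sets $C\neq\mathbb{R}^2$ (half-plane, strip, wedge), which you wave away as ``elementary means''. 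Without the descent argument or a substitute, your claim that the structure is forced is unsupported.

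Second, the probabilistic step is misconceived. Recurrence of a Markov chain is a statement about almost-sure returns of random trajectories; it does not imply that the \emph{deterministic} sequence of fault-line data of one particular tiling ever repeats, let alone is periodic, so ``recurrent $\Rightarrow$ configurations recur $\Rightarrow$ layers are periodic $\Rightarrow$ not perfect'' is not a valid deduction (and with continuous offset parameters the chain you describe is not even well defined). The paper's use of the random walk is entirely different: on the combinatorial structure already forced by Lemma~\ref{lem_3} it considers the fixed walk on $\ZE$ with transition probability $\frac13$ along the three incidence directions, proves recurrence by a Stirling estimate on return probabilities, and invokes the Liouville-type theorem that non-negative harmonic functions of a recurrent irreducible chain are constant (Corollary~\ref{cor_1}). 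The geometric input is that $f(i,j)=\DIAM(T_{i,j})$ satisfies the discrete mean-value identity (\ref{eq_harmonic}), derived from the side-length equations (\ref{eq_sum1})--(\ref{eq_sum3}); constancy of $f$ then pins the tiling to the Figure~\ref{fig_1} family (Lemma~\ref{lem_4}), which visibly contains congruent tiles and so is not perfect. Your bounded-case argument via Theorem~\ref{thm_old}(a) is fine (though the paper deliberately reproves that case via Proposition~\ref{prop_1}), but as it stands the unbounded case — both the reduction and the probabilistic conclusion — does not go through.
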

Theorem~\ref{thm_main} implies Theorem~\ref{thm_old}. The generalization of Theorem~\ref{thm_old}(b) is strict in so far as it excludes tilings of $\mathbb{R}^2$ such that the infimum of the sizes of all tiles is positive, but not attained as a minimum. The proof will be given in the following section. In parts it adopts an idea of Scherer \cite{scherer1983}, but does not make use of Theorem~\ref{thm_old}. This way it gives an alternative approach to Theorem~\ref{thm_old}(a) (see Subsections~\ref{subsec_2} and \ref{subsec_3} below).

Often one requires a tiling to be \emph{locally finite}, that is, every bounded subset of $\mathbb{R}^2$ meets at most finitely many tiles. For us local finiteness is not an a priory assumption. The general setting is much more flexible. In particular, it allows for perfect tilings by equilateral triangles of the plane $\mathbb{R}^2$ and of every open subset of $\mathbb{R}^2$ \cite[Corollary~2]{richter2012}. A convex polygon can be perfectly tiled if and only if all its inner angles are not smaller than $\frac{\pi}{3}$
\cite[Corollary~7]{richter201?}. In fact, if a subset of $\mathbb{R}^2$ possesses a tiling by equilateral triangles, then it possesses a perfect tiling by equilateral triangles \cite[Theorem~1]{richter201?}.

Since all the tilings mentioned in the last paragraph exhibit accumulation phenomena, we formulate the remaining open question explicitly.
\begin{problem}\label{prob}
Does the plane $\mathbb{R}^2$ itself or any of its unbounded convex subsets possess a locally finite perfect tiling by equilateral triangles?
\end{problem}

As byproducts of the proof of Theorem~\ref{thm_main}, we obtain a further necessary topological condition that is satisfied by every locally finite perfect tiling of an unbounded convex subset of $\mathbb{R}^2$ by equilateral triangles (see Corollary~\ref{cor_condition}), and we characterize all locally finite tilings of (not necessarily unbounded) convex sets by (not necessarily incongruent) equilateral triangles that do not meet that condition (see Propositions~\ref{prop_1} and \ref{prop_2}).

Moreover, we shall prove a result in the spirit of two theorems from \cite{kupavskii2017}, which say that every locally finite tiling of the plane by arbitrary triangles with side lengths in the interval $[1,2)$ contains two triangles that share a side \cite[Theorem~5]{kupavskii2017} and that every finite tiling of a convex $k$-gon, $k \ge 4$, with arbitrary triangles contains two triangles that share a side \cite[Theorem~6]{kupavskii2017}.

\begin{theorem}\label{thm_shared-side}
Let $\mathcal{T}$ be a tiling of a convex subset of the plane by at least two equilateral triangles with a positive lower bound on the side lengths of all tiles. Then $\mathcal{T}$ contains two triangles that share a side or $\mathcal{T}$ is similar to a tiling of the type depicted in Figure~\ref{fig_1} with suitable $\alpha \in \left(0,\frac{1}{2}\right]$.
\end{theorem}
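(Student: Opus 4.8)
The plan is to establish the dichotomy in the form: \emph{if no two tiles of $\mathcal{T}$ share a side, then $\mathcal{T}$ is similar to one of the tilings of Figure~\ref{fig_1}} (which, being tilings of $\mathbb{R}^2$, in particular forces the convex domain to be a similar copy of the whole plane). The first step is to upgrade the hypothesis of a positive lower bound $\delta$ on the side lengths to local finiteness. Every tile, being an equilateral triangle of side length at least $\delta$, has area at least $\tfrac{\sqrt3}{4}\delta^2$; if it meets a disk $B(p,R)$ and has side length $<R$, then it has diameter $<R$ and hence lies in $B(p,2R)$, while if its side length is $\ge R$, then near the point where it meets $B(p,R)$ it covers a circular sector of angle $\tfrac\pi3$ and radius $\tfrac{\sqrt3}{2}R$ contained in $B(p,2R)$. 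In either case it covers a subset of $B(p,2R)$ of area at least $c=\min\{\tfrac{\sqrt3}{4}\delta^2,\tfrac\pi8 R^2\}>0$; since these subsets have pairwise disjoint interiors and $B(p,2R)$ has area $4\pi R^2$, at most $4\pi R^2/c$ tiles meet $B(p,R)$. Hence $\mathcal{T}$ is locally finite, and the structural results of the preceding sections become applicable.

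Next I would feed $\mathcal{T}$ into the classification of locally finite tilings of convex sets by equilateral triangles in which no two tiles share a side, that is, into Propositions~\ref{prop_1} and \ref{prop_2}. Two local observations drive that classification. First, at any point $v$ interior to the domain and lying on the boundary of some tile, the finitely many tiles containing $v$ sweep out the full angle $2\pi$, each contributing $\tfrac\pi3$ (when $v$ is a vertex of the tile) or $\pi$ (when $v$ lies in the relative interior of a side of the tile), so the only possible local pictures are: six tiles meeting at $v$; three tiles meeting at $v$ together with one tile passing straight through; or two tiles passing straight through. Second, because no two tiles share a full side, each endpoint of each side $S$ of each tile is forced to be a genuine $T$-junction (or a vertex of the domain): the tile abutting $S$ from the far side cannot have $S$ among its own sides, so a further tile must enter at an interior point of $S$. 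Chaining these constraints from one tile to its neighbours, then to theirs, and so on, pins down all sizes and placements, and Propositions~\ref{prop_1} and \ref{prop_2} conclude that, apart from the trivial one-tile tiling, the only possibility is a tiling similar to one of Figure~\ref{fig_1}, the parameter $\alpha\in(0,\tfrac12]$ being read off as the size ratio at any one $T$-junction. Since $\mathcal{T}$ has at least two tiles, the trivial case is excluded and Theorem~\ref{thm_shared-side} follows.

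The crux — and the place where essentially all the work sits — is the rigidity asserted in the second step: one must verify that a single $T$-junction together with the admissible vertex figures determines the neighbouring tiles up to the one scaling parameter $\alpha$, that iterating this forcing never meets an obstruction and extends consistently across the whole plane, and that the global outcome is necessarily the doubly periodic pattern of Figure~\ref{fig_1}; in particular that the supremum of the tile sizes is attained and that no proper convex subset of $\mathbb{R}^2$, bounded or not, can host such a tiling. If one prefers to argue Theorem~\ref{thm_shared-side} directly rather than through Propositions~\ref{prop_1} and \ref{prop_2}, the same bookkeeping can be run from scratch: normalise one tile by a similarity to have unit side, and prove by induction on combinatorial distance that every newly exposed unit-length edge is met on its far side by exactly one $\alpha$-tile and one $(1-\alpha)$-tile sharing an interior point of that edge, rebuilding the tiling of Figure~\ref{fig_1} one strip at a time.
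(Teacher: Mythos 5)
Your first step (local finiteness from the lower bound $\delta$) is fine; it is essentially the dilation argument of Lemma~\ref{lem_1}, and the loose sector claim can be repaired by shrinking each tile towards the point where it meets the disc. The genuine gap is in the second step. Propositions~\ref{prop_1} and \ref{prop_2} do \emph{not} classify locally finite tilings ``in which no two tiles share a side''; their hypothesis is the absence of $E$-configurations, i.e.\ of isometric copies of the set (\ref{eq_1}) inside $\SKEL(\mathcal{T})$, which is a different and a priori stronger requirement: a tiling without shared sides can very well contain $E$-configurations. The missing bridge is exactly Lemma~\ref{lem_2} (Scherer's descent): if $\mathcal{T}$ has an $E$-configuration of length $\lambda$ and no two tiles share a side, one manufactures $E$-configurations of lengths $\lambda_k \le \lambda - k\,\INFDIAM(\mathcal{T})$, so the positive lower bound on side lengths yields a contradiction, and only then may one conclude that $\mathcal{T}$ has no $E$-configuration and feed it into Proposition~\ref{prop_1} (bounded case, where all five tilings of Figure~\ref{fig_3} contain shared sides) and Proposition~\ref{prop_2} (unbounded case, giving Figure~\ref{fig_1}). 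Your proposal never establishes ``no $E$-configuration'' and, tellingly, never uses the lower bound again after the local-finiteness step, so the appeal to Propositions~\ref{prop_1} and \ref{prop_2} is a misapplication of their hypotheses.

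The fallback ``run the bookkeeping from scratch'' does not close this gap either: the assertion that a single $T$-junction together with the admissible vertex figures ``pins down all sizes and placements'' and that an induction rebuilds Figure~\ref{fig_1} strip by strip is precisely the content of Lemmas~\ref{lem_3} and \ref{lem_4}, and the second half of it is not routine. The combinatorial forcing only determines the \emph{topology} of the tiling (Lemma~\ref{lem_3}); the sizes are not locally forced, since a priori $(i,j)\mapsto\DIAM(T_{i,j})$ could be any positive solution of the mean-value relation (\ref{eq_harmonic}). That all these sizes coincide is a Liouville-type statement, which the paper derives from the recurrence of the random walk on the directed graph $\Gamma$ -- the one non-elementary ingredient of the whole proof. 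Your text itself calls this ``the crux'' but leaves it unverified, so as it stands the proposal asserts rather than proves the dichotomy.
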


Theorem~\ref{thm_shared-side} improves on a recent result on tilings of the complete plane $\mathbb{R}^2$ by Aduddell, Ascanio, Deaton and Mann \cite[Theorem~1]{aduddell2017} in two ways. Firstly they assume that the tiling contains only tiles of finitely many different sizes, whereas we only impose a positive lower bound on the size of the tiles. Secondly their result requires an additional symmetry condition called equitransitivity, which we do not need at all.


\section{Proofs}


\subsection{Local finiteness}

\begin{lemma}\label{lem_1}
If a tiling $\mathcal{T}$ by equilateral triangles is not locally finite, then it contains arbitrarily small tiles.
\end{lemma}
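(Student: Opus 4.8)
The plan is to prove the contrapositive: assuming that $\DIAM(T)\ge\delta$ holds for all tiles $T\in\mathcal T$ and some fixed $\delta>0$ (equivalently, every tile has side length at least $\delta$), I will show that $\mathcal T$ is locally finite. Since every bounded subset of $\mathbb R^2$ is contained in some closed disc $\bar B(x_0,R)$, it suffices to bound the number of tiles meeting $\bar B(x_0,R)$ by a quantity depending only on $R$ and $\delta$.

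The geometric input I would establish first is the following. \emph{Put $r_0:=\frac{\delta}{2\sqrt3}$. For every equilateral triangle $T$ of side length $s\ge\delta$ and every point $q\in T$ there is an open disc $D\subseteq T$ of radius $r_0$ whose centre lies within distance $2r_0$ of $q$.} Indeed, let $g$ be the centroid of $T$; then the incircle $\bar B(g,\rho)$ of radius $\rho=\frac{s}{2\sqrt3}$ lies in $T$, and $\|q-g\|$ does not exceed the circumradius $\frac{s}{\sqrt3}=2\rho$. Taking $t:=\delta/s\in(0,1]$, the closed disc $\bar B\!\left((1-t)q+tg,\;t\rho\right)$ consists entirely of convex combinations $(1-t)q+tz$ with $z\in\bar B(g,\rho)\subseteq T$, hence lies in $T$; its radius is $t\rho=r_0$ and its centre is at distance $t\|q-g\|\le 2t\rho=2r_0$ from $q$, so its interior is the desired disc $D$. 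The reason for phrasing it this way — and the one spot where a little care is needed — is that no \emph{upper} bound on the tile sizes is assumed: a tile meeting $\bar B(x_0,R)$ may extend arbitrarily far, so the crude estimate ``each tile has area at least $\tfrac{\sqrt3}{4}\delta^2$ and lies in a fixed bounded region'' is unavailable, and the statement above is what recovers a fixed amount of each relevant tile's area inside a fixed enlargement of the disc.

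Granting this, the remainder is a packing argument. For each tile $T\in\mathcal T$ meeting $\bar B(x_0,R)$ fix a point $q_T\in T\cap\bar B(x_0,R)$ and let $D_T\subseteq T$ be the open disc of radius $r_0$ provided by the statement, so that $D_T\subseteq\bar B(x_0,R+3r_0)$. Each $D_T$, being an open subset of the closed triangle $T$, is contained in the interior of $T$; since the tiles of $\mathcal T$ have pairwise disjoint interiors, the discs $D_T$ are pairwise disjoint. Comparing areas, the number of tiles meeting $\bar B(x_0,R)$ is at most $(R+3r_0)^2/r_0^2$, which is finite. Therefore every bounded subset of $\mathbb R^2$ meets only finitely many tiles, i.e.\ $\mathcal T$ is locally finite, and the contrapositive is proved. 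I expect the only real obstacle to be isolating and proving the displayed geometric statement in a scale-invariant form, as above; the packing count is then routine.
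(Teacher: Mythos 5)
Your proof is correct: the scale-invariant claim about inscribing a disc of radius $\delta/(2\sqrt3)$ near any point of a tile is sound, and the disjointness/packing count finishes the contrapositive cleanly. This is essentially the paper's argument in contrapositive form --- the paper likewise dilates each tile toward a chosen point of the disc to obtain uniformly sized, non-overlapping copies inside a bounded enlargement and concludes by an area comparison.
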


\begin{proof}
We use the notation $D(x_0,\varrho)=\{x \in \mathbb{R}^2: \|x-x_0\| \le \varrho\}$ for the Euclidean disc with center $x_0 \in \mathbb{R}^2$ and radius $\varrho > 0$. Let $\INFDIAM(\mathcal{T})=\inf_{T \in \mathcal{T}} \DIAM(T)$.

Since $\mathcal{T}$ is not locally finite, there exist a disc $D(x_0,\varrho)$ and a sequence of distinct triangles $T_1,T_2,\ldots \in \mathcal{T}$ such that we can pick $x_i \in D(x_0,\varrho) \cap T_i$, $i=1,2,\ldots$ Let $T'_i \subseteq T_i$ be obtained from $T_i$ by a dilation with center $x_i$ and factor $\frac{\INFDIAM(\mathcal{T})}{\DIAM(T_i)} \le 1$. (Here degeneracy with factor $0$ is not forbidden.) Then $\DIAM(T'_i)=\INFDIAM(\mathcal{T})$ and $T'_i \subseteq D(x_0,\varrho+\INFDIAM(\mathcal{T}))$ by the triangle inequality, because $x_i \in D(x_0,\varrho) \cap T'_i$. Consequently, $D(x_0,\varrho+\INFDIAM(\mathcal{T}))$ contains infinitely many non-overlapping equilateral triangles $T'_i$ of size $\INFDIAM(\mathcal{T})$. A volumetric argument yields our claim $\INFDIAM(\mathcal{T})=0$.
\end{proof}

Lemma~\ref{lem_1} on tilings by similar images of an equilateral triangle $T$ sounds trivial, but Figure~\ref{fig_1a} illustrates that tilings by \emph{affine} images of $T$ may fail to be locally finite without containing arbitrarily small pieces. 

\begin{figure}
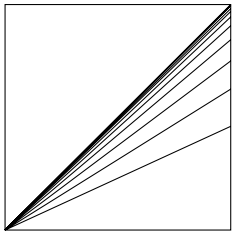


\tikz[xscale=3,yscale=3]{
\draw
  (0,0) -- (1,0) -- (1,1) -- (0,1) -- cycle
  (0,0) -- (1,1)
  (0,0) -- (1,.46)
  (0,0) -- (1,.625)
  (0,0) -- (1,.75)
  (0,0) -- (1,.844)
  (0,0) -- (1,.906)
  (0,0) -- (1,.945)
  (0,0) -- (1,.969)
  (0,0) -- (1,.982)
  (0,0) -- (1,.995)
  (0,0) -- (1,.997)
  (0,0) -- (1,.998)
  (0,0) -- (1,.999)
;}
\caption{Tiling of a square by infinitely many triangles.\label{fig_1a}}

\end{figure}


\subsection{$E$-configurations\label{subsec_2}}

Now we can restrict our consideration to locally finite tilings $\mathcal{T}$ of convex sets.
Let $\BD(T)$ denote the boundary of a triangle $T$. The \emph{skeleton} $\SKEL(\mathcal{T})= \bigcup_{T \in \mathcal{T}} \BD(T)$ of a tiling $\mathcal{T}$ by equilateral triangles is a union of segments of only three directions; say $v_E=(1,0)$ (east), $v_{NE}=\left(\frac{1}{2},\frac{\sqrt{3}}{2}\right)$ (north east) and $v_{NW}=\left(-\frac{1}{2},\frac{\sqrt{3}}{2}\right)$ (north west). 

Next we follow an idea of Scherer \cite{scherer1983}. We denote the closed line segment between $x,y \in \mathbb{R}^2$ by $[x,y]$. For every $\varepsilon > 0$ and every $0 < \mu < 1$, we call an isometric image of the set
\begin{equation}\label{eq_1}
[(0,0),\lambda v_E] \cup [(0,0), \varepsilon v_{NW}] \cup [\mu\lambda v_E,\mu\lambda v_E+ \varepsilon v_{NW}] \cup [\lambda v_E,\lambda v_E+ \varepsilon v_{NW}]
\end{equation}
an \emph{$E$-configuration of length $\lambda > 0$} (see the left-hand part of Figure~\ref{fig_2}). The respective isometric image of the segment $[(0,0),\lambda v_E]$ will be called the \emph{basis} of the $E$-configuration.
We say that $\mathcal{T}$ possesses an $E$-configuration of length $\lambda$ if $\SKEL(\mathcal{T})$ contains such a configuration as a subset. 
\begin{figure}
\begin{center}
\begin{tikzpicture}[xscale=.26,yscale=.45]

\draw
  (-1,1) -- (0,0) -- (16,0) -- (15,1)
  (8,0) -- (7,1)
  
  (0,-.6) node {\small $(0,0)$}
  (-1,1) node[above] {\small $\varepsilon v_{NW}$}
  (8,-.6) node {\small $\mu\lambda v_E$}
  (7,1) node[above] {\small $\mu\lambda v_E+\varepsilon v_{NW}$}
  (16,-.6) node {\small $\lambda v_E$}
  (15,1) node[above] {\small $\lambda v_E+\varepsilon v_{NW}$}
  
  (22,1) -- (23,0) -- (39,0) -- (38,1)
  (23,0) -- (27,4) -- (31,0) -- (33,2) -- (35,0)
  
  (27,1.33) node {$T_1$}
  (33,.66) node {$T_2$}
  (23,-.6) node {$x_1=(0,0)$}
  (31,-.7) node {$x_2$}
  (25.6,4) node {$x_3$}
  (35,-.7) node {$x_4$}
  (33,2) node[above] {$x_5$}
  (39,-.6) node {$\lambda v_E$}
  ;
  
\draw[densely dotted]
  (27,4) -- (29,4) node[right] {Case 1.}
  (27,4) -- (26,5) node[above] {Case 2.}
  ;
  
\end{tikzpicture}
\end{center}
\caption{An $E$-configuration and the proof of Lemma~\ref{lem_2}.}
\label{fig_2}
\end{figure}
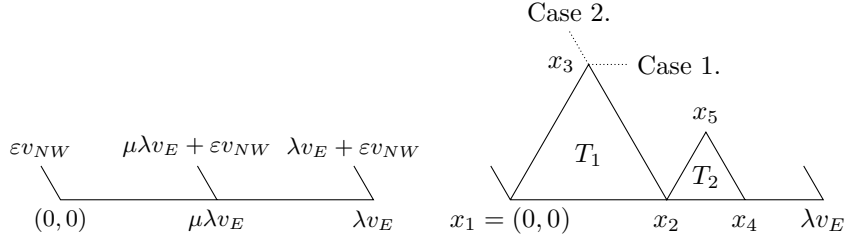

\begin{lemma}[cf. Scherer \cite{scherer1983}]\label{lem_2}
If a locally finite tiling $\mathcal{T}$ of a convex set $C \subseteq \mathbb{R}^2$ by equilateral triangles possesses an $E$-configuration and does not contain two triangles sharing a side, then $\mathcal{T}$ contains arbitrarily small tiles.
\end{lemma}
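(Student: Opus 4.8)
The plan is to argue by descent: from one $E$-configuration I want to extract a strictly shorter one, together with a concrete tile whose diameter is bounded by the new length, and then iterate.

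First I would normalise. After an isometry, the $E$-configuration may be taken to have basis $b=[(0,0),\lambda v_E]$ with its three whiskers $[(0,0),\varepsilon v_{NW}]$, $[\mu\lambda v_E,\mu\lambda v_E+\varepsilon v_{NW}]$, $[\lambda v_E,\lambda v_E+\varepsilon v_{NW}]$ pointing into the open upper half-plane. Since $b\subseteq\SKEL(\mathcal{T})$ and $\mathcal{T}$ is locally finite, there is a well-defined tile $T_1\in\mathcal{T}$ that lies on the upper side of $b$ and whose bottom edge covers an initial subsegment of $b$ from $(0,0)$; having a horizontal edge on $b$ with its interior above, $T_1$ is an upward-pointing equilateral triangle. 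The left whisker forces the bottom-left vertex of $T_1$ to be exactly $(0,0)$, since otherwise that whisker would enter the interior of $T_1$; likewise the points $\mu\lambda v_E$ and $\lambda v_E$ cannot lie in the relative interior of the horizontal bottom edge of $T_1$ (else the respective whiskers would enter its interior), so $s_1:=\DIAM(T_1)\le\mu\lambda<\lambda$. Thus the bottom-right vertex of $T_1$ is $x_2=s_1 v_E\in b$ and its apex is $x_3=s_1 v_{NE}$. A parallel argument along $b$, together with local finiteness, fills all of $b$ by a finite row of upward triangles, although I expect to need only $T_1$ and its immediate surroundings.

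The descent step is the technical heart, and it exploits the no-shared-side hypothesis at the apex $x_3$. The two edges $[(0,0),x_3]$ and $[x_2,x_3]$ of $T_1$ cannot be common edges of $T_1$ with neighbouring tiles, so each of them is a proper subsegment of an edge of the adjacent tile; that larger edge then passes straight through $x_3$. One checks, using $b$ and the row of triangles sitting on it, that these edges must continue upward from $x_3$ (away from $b$), since a downward continuation would run back across $b$ or overlap a tile of the row. Recording which tile edges issue from $x_3$ (and, where needed, from $x_2$), one is left with exactly two possibilities for the local picture at $x_3$ — the two cases marked by the dotted rays in Figure~\ref{fig_2}. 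In each case I would single out a new $E$-configuration: a subsegment of $\SKEL(\mathcal{T})$ in one of the three admissible directions, of length at most $s_1$ and contained in a bounded neighbourhood of $T_1$ (hence of the original configuration), whose three whiskers are provided by the edges of $T_1$ and of its neighbours at $x_3$ together with the interior whisker of the original configuration at $\mu\lambda v_E$. Carrying out this case analysis and verifying that all three whiskers of the new configuration genuinely lie in $\SKEL(\mathcal{T})$ is the main obstacle; arranging the construction so that the successive configurations remain inside one fixed bounded region is the secondary point requiring care.

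Finally, iterating the descent produces $E$-configurations of lengths $\lambda=\lambda_0>\lambda_1>\lambda_2>\cdots$ with $\lambda_{n+1}\le s_1^{(n)}:=\DIAM(T_1^{(n)})<\lambda_n$, where each $T_1^{(n)}$ is a tile of $\mathcal{T}$ and all the configurations stay in a common bounded set. If $\lambda_n\to 0$, then the tiles $T_1^{(n)}$ have diameters tending to $0$ and the lemma follows. If instead $\lambda_n\to L>0$, then $T_1^{(0)},T_1^{(1)},\dots$ are infinitely many distinct, pairwise non-overlapping tiles of diameter bounded below by a positive constant, all contained in a fixed bounded set, contradicting local finiteness. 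Hence $\mathcal{T}$ contains arbitrarily small tiles.
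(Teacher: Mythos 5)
Your overall strategy (descent on $E$-configurations using the tile(s) sitting on the basis and the no-shared-side hypothesis at the apex) is the right one, but the two places you yourself flag as ``the technical heart'' contain genuine gaps. First, the descent step: from the hypothesis that no two tiles share a side you conclude that each of the edges $[x_1,x_3]$ and $[x_2,x_3]$ of $T_1$ ``is a proper subsegment of an edge of the adjacent tile''. That is not what the hypothesis gives: the alternative to being a common side is \emph{either} being strictly contained in a longer edge of one neighbour \emph{or} being subdivided among two or more smaller neighbours, and the second possibility is the typical one -- indeed it is exactly what supplies the interior whisker of the new configuration in the correct argument. Your proposed source for that interior whisker (the original whisker at $\mu\lambda v_E$) cannot work, because $\mu\lambda v_E$ lies on the old horizontal basis and is never an interior point of the new basis $[x_2,x_3]$ or $[x_1,x_3]$. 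What actually makes the step go through is the case distinction at $x_3$: if $\SKEL(\mathcal{T})$ contains a segment going east from $x_3$, take $[x_2,x_3]$ as new basis; then a neighbour edge strictly containing $[x_2,x_3]$ is impossible (that eastward segment would enter its interior), so the east side of $[x_2,x_3]$ is subdivided, producing the interior whisker, while the end whiskers come from $[x_2,x_4]$ and the eastward segment at $x_3$. If not, a segment in direction $v_{NW}$ at $x_3$ exists and the same reasoning (now using the original whisker at $x_1$) applies to the basis $[x_1,x_3]$. In short, which side of $T_1$ you take as the new basis must be dictated by the case, precisely so that the opposite side is forced to be subdivided; your asserted dichotomy would in fact destroy the interior whisker.

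Second, the conclusion: you only obtain the strict inequality $\lambda_{n+1}\le\DIAM(T_1^{(n)})<\lambda_n$ and then argue that all configurations ``stay in a common bounded set''. That claim is unsupported and false in general: consecutive bases merely share an endpoint, so after $n$ steps the configuration may have drifted by up to $\lambda_1+\dots+\lambda_n$, which diverges precisely in the scenario $\lambda_n\to L>0$ that you need to exclude; local finiteness then yields no contradiction. The paper avoids any boundedness consideration by extracting a \emph{uniform} decrement: the middle whisker of $E_n$ forces at least two tiles $T_1,T_2$ on its basis, so $\lambda_n\ge\DIAM(T_1)+\DIAM(T_2)$ and hence $\lambda_{n+1}=\DIAM(T_1)\le\lambda_n-\INFDIAM(\mathcal{T})$, giving $\lambda_k\le\lambda-k\,\INFDIAM(\mathcal{T})$ and, since all lengths are positive, $\INFDIAM(\mathcal{T})=0$. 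Your construction already contains the second tile (your ``row of triangles''); you just never use it quantitatively. Repairing your write-up therefore requires (i) the corrected case analysis at $x_3$ described above and (ii) replacing the boundedness argument by the uniform decrement $\INFDIAM(\mathcal{T})$ per step.
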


\begin{proof}
W.l.o.g. the tiling $\mathcal{T}$ possesses the $E$-configuration $E_0$ from (\ref{eq_1}). Since $\mathcal{T}$ covers the convex hull of $E_0$, there are triangles $T_1=\triangle x_1x_2x_3, T_2=\triangle x_2x_4x_5 \in \mathcal{T}$ such that $x_1=(0,0)$, $x_2 \in [(0,0),\lambda v_E]$, $x_4 \in [x_2,\lambda v_E]$ and $x_3$ as well as $x_5$ have positive second coordinates (see the right-hand part of Figure~\ref{fig_2}). 

If $\SKEL(\mathcal{T})$ contains a segment starting from $x_3$ in direction $v_E$ (Case 1 in Figure~\ref{fig_2}), then there is another $E$-configuration $E_1$ with basis $[x_2,x_3]$ since $[x_2,x_2]$ cannot be the side of another tile from $\mathcal{T}$. If not, then $\SKEL(\mathcal{T})$ must contain a segment starting from $x_3$ in direction $v_{NW}$ (Case 2 in Figure~\ref{fig_2}) and we obtain an $E$-configuration $E_1$ with basis $[x_1,x_3]$ by the same argument.
In either case the length of $E_1$ is $\lambda_1=\DIAM(T_1)$. Since the length $\lambda$ of $E_0$ satisfies
$$
\lambda \ge \DIAM(T_1)+\DIAM(T_2) \ge \lambda_1+\INFDIAM(\mathcal{T}),
$$
the $E$-configuration $E_1$ has a length $\lambda_1 \le \lambda - \INFDIAM(\mathcal{T})$.

Iterating this procedure, we obtain $E$-configurations $E_k$, $k=1,2,\ldots$, of lengths $\lambda_k \le \lambda- k \INFDIAM(\mathcal{T})$. Since all these lengths are positive, the claim $\INFDIAM(\mathcal{T})=0$ follows. 
\end{proof}


\subsection{Tilings of bounded convex sets without $E$-configurations\label{subsec_3}}

\begin{proposition}\label{prop_1}
Let $\mathcal{T}$ be a finite tiling of a bounded convex set $C \subseteq \mathbb{R}^2$ by at least two equilateral triangles and assume that $\mathcal{T}$ does not contain an $E$-configuration. Then $\mathcal{T}$ is the image of one of the five tilings of polygons from Figure~\ref{fig_3} under a similarity transformation.
\end{proposition}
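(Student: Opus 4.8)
The plan is to run a ``forced moves'' argument anchored at a largest tile, eliminating every possibility except the five listed tilings by exhibiting an $E$-configuration.

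\textbf{Reductions.} First I would observe that, since every tile is an equilateral triangle with sides parallel to $v_E$, $v_{NE}$ or $v_{NW}$, the boundary of $C$ is a polygon with edges in these three directions, so each interior angle of $C$ equals $\frac{\pi}{3}$ or $\frac{2\pi}{3}$; hence, up to the admissible similarity, $C$ is a triangle, a parallelogram, a trapezoid, a pentagon with exactly one $\frac{\pi}{3}$-angle, or a hexagon all of whose angles are $\frac{2\pi}{3}$. Then I would fix a tile $T_0$ of maximal diameter (it exists, as $\mathcal{T}$ is finite) and, after a rotation or reflection, assume $T_0$ is upward pointing, with horizontal base $B$, lying above the line $\ell\supseteq B$.

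\textbf{The configuration just below $B$.} The core step is a local analysis of the tiles meeting $B$ from below. Any tile meeting $B$ from below along a segment of positive length is downward pointing with its top side on $\ell$; where two of them meet at a point of $\ell$, the remaining $\frac{\pi}{3}$-wedge is filled by an upward pointing tile with apex on $\ell$; and all of these have diameter $\le\DIAM(T_0)$. Using the maximality of $T_0$ together with the hypothesis that $\mathcal{T}$ has no $E$-configuration, I would show that exactly one of the following occurs: (a) $B\subseteq\BD(C)$; (b) a single tile, congruent to $T_0$ with top side $B$, lies below $B$; (c) exactly three tiles lie below $B$ --- a downward, an upward and a downward one, each of diameter $\tfrac12\DIAM(T_0)$. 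The point is that every other local picture forces a sub-segment of $\ell$ to carry, on its lower side, segments of $\SKEL(\mathcal{T})$ emanating in one common direction from its two endpoints and from an interior point, i.e.\ an $E$-configuration.

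\textbf{Propagation.} Next I would iterate this analysis at the sides of the region assembled so far --- the rhombus $T_0\cup(\text{tile below})$ in case (b), the pentagon $T_0\cup(\text{three tiles below})$ in case (c), and $T_0$ itself in case (a) --- each step again being forced by maximality of $T_0$ and the no-$E$-configuration hypothesis, and each step either completing $\mathcal{T}$ or producing an $E$-configuration. Carrying this out, while distinguishing the shapes of $C$ and the interior angle of $C$ at each newly exposed segment, should leave precisely the five tilings of Figure~\ref{fig_3}: the rhombus, the trapezoid, the equilateral triangle cut into four, the pentagon built from one triangle and three half-sized ones, and the regular hexagon cut into six triangles through its centre. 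When $C$ is the hexagon there is no $\frac{\pi}{3}$-corner to anchor the argument, so I would instead start from a largest tile meeting $\BD(C)$ and walk around $\BD(C)$, which along the way also forces $C$ to be regular.

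\textbf{The hard part.} There is no single difficult idea; the obstacle is the length and delicacy of this case distinction. Because $\mathcal{T}$ is not assumed to be edge-to-edge, the datum one has to track at each vertex is the exact set of skeleton directions emanating from it, and one must verify, in every sub-case, that any deviation from the five listed tilings creates a segment of $\SKEL(\mathcal{T})$ carrying parallel emanating segments at its two endpoints and at an interior point on a common side --- an $E$-configuration in the sense of Subsection~\ref{subsec_2}. I expect that phrasing the whole argument as a descent (``$\mathcal{T}$ not among the five $\Rightarrow$ locate such a segment'') is what keeps the bookkeeping manageable.
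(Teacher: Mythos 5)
Your strategy (anchor at a tile of maximal diameter and propagate outwards) is genuinely different from the paper's, but its declared core step has a real gap. You claim that, below the base $B=[w,e]$ of a maximal upward tile $T_0$, only the three pictures (a), (b), (c) can occur, because ``every other local picture forces a sub-segment of $\ell$ to carry, on its lower side,'' an $E$-configuration. That mechanism does not suffice. Consider the picture in which $B$ is covered from below by exactly two downward tiles with top sides $[w,x_1]$ and $[x_1,e]$, where $x_1$ is \emph{not} the midpoint, and where $w$ and $e$ are vertices of $C$ with inner angle $\frac{2\pi}{3}$ whose boundary edges run along the outer sides of $T_0$ and of the two lower tiles. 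The skeleton segments leaving $\ell$ downwards are then: one in direction $-v_{NW}$ at $w$, both $-v_{NE}$ and $-v_{NW}$ at $x_1$, and one in direction $-v_{NE}$ at $e$; no direction occurs simultaneously at the two endpoints and at an interior point of any subsegment of $[w,e]$, and above $\ell$ the only segments near $B$ are the two sides of $T_0$ at $w$ and $e$. So this ``other local picture'' contains no $E$-configuration in a neighbourhood of $B$ at all: neither the position of the splitting point, nor the half-sizes in your case (c), nor the single half-sized filler tile are forced by the local analysis you describe. Excluding such pictures requires following the tiling well away from $B$ (what happens at the apex of the larger lower tile, how $\BD(C)$ closes up), and the $E$-configurations that eventually appear typically have their bases on slanted skeleton lines or on the upper side of $\ell$, not on its lower side. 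Since your propagation step presupposes the trichotomy, this is a missing argument, not a routine verification; in effect the ``core step'' as stated carries most of the difficulty of the proposition and is only asserted.

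For comparison, the paper avoids this by anchoring at $\BD(C)$ rather than at a maximal tile: $C$ is a convex polygon with angles $\frac{\pi}{3}$ or $\frac{2\pi}{3}$, and on a boundary side the adjacent sides of $C$ and the extreme tiles supply whiskers of \emph{known} directions at both endpoints, while every interior subdivision point supplies whiskers in both slanted directions. Hence the absence of $E$-configurations immediately yields two clean constraints (a side with a $\frac{2\pi}{3}$-endpoint carries exactly one tile side; a side joining two $\frac{\pi}{3}$-vertices carries at most two), after which the case discussion of the possible shapes of $C$ is short and produces exactly the five tilings of Figure~\ref{fig_3}. If you wish to keep the maximal-tile approach you must supply the global argument that rules out unbalanced subdivisions such as the one above; otherwise the boundary-first reduction is the more economical route.
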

\begin{figure}
\begin{center}
\begin{tikzpicture}[xscale=.37,yscale=.641]

\draw 
  (0,0) -- (4,0) -- (2,2) -- cycle
  (2,0) -- (3,1) -- (1,1) -- cycle
  
  (9,0) -- (11,2) -- (7,2) -- (5,0) -- (9,0) -- (7,2)
  
  (14,2) -- (12,0) -- (20,0) -- (18,2) -- (14,2) -- (16,0) -- (18,2)
  
  (23,2) -- (21,0) -- (25,0) -- (26,1) -- (25,2) -- (23,2) -- (25,0)
  (25,2) -- (24,1) -- (26,1)
  
  (27,1) -- (28,0) -- (30,0) -- (31,1) -- (30,2) -- (28,2) -- (27,1) -- (31,1)
  (28,0) -- (30,2)
  (28,2) -- (30,0)
  ;
  
\end{tikzpicture}
\end{center}
\caption{Finite tilings of bounded convex sets without $E$-configurations.}
\label{fig_3}
\end{figure}
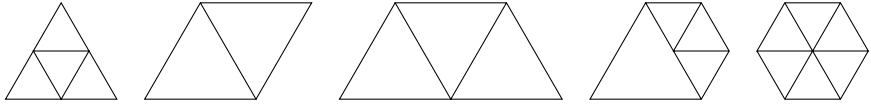

\begin{proof}
Here $C$ is a convex polygon with inner angles of sizes $\frac{\pi}{3}$ or $\frac{2\pi}{3}$. Hence $C$ can be a triangle (three inner angles of $\frac{\pi}{3}$), a quadrilateral (two angles of $\frac{\pi}{3}$ and two of $\frac{2\pi}{3}$) with parallelogram and trapezoid as subcases, a pentagon (one angle of $\frac{\pi}{3}$ and four of $\frac{2\pi}{3}$) or a hexagon (six angles of $\frac{2\pi}{3}$).

Since $\mathcal{T}$ does not contain an $E$-configuration, the sides of $C$ satisfy the following.
\begin{itemize}
\item[(i)] If at least one endpoint of a side $S$ of $C$ corresponds to an inner angle of $\frac{2\pi}{3}$, then $S$ is a side of only one triangle of $\mathcal{T}$.
\item[(ii)] If a side $S$ of $C$ connects two vertices with inner angles of $\frac{\pi}{3}$, then there are at most two triangles in $\mathcal{T}$ each having one side in $S$.
\end{itemize}

Discussing the five cases for $C$ mentioned above under the restrictions (i) and (ii) gives the five tilings in Figure~\ref{fig_3}. (The trivial tiling $\mathcal{T}=\{C\}$ for the case when $C$ is a triangle is excluded by the assumption of Proposition~\ref{prop_1}).
\end{proof}

Note that Lemma~\ref{lem_2} and Proposition~\ref{prop_1} give a new elementary approach to Theorem~\ref{thm_old}(a).


\subsection{Tilings of unbounded convex sets without $E$-configurations}

\begin{proposition}\label{prop_2}
Let $\mathcal{T}$ be a locally finite tiling of an unbounded convex set $C \subseteq \mathbb{R}^2$ by equilateral triangles and suppose that $\mathcal{T}$ does not contain an $E$-configuration. 
Then $C=\mathbb{R}^2$ and $\mathcal{T}$ is the image of one of the tilings illustrated in Figure~\ref{fig_1} under a similarity transformation.
\end{proposition}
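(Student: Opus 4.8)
The plan is to combine the local structure forced by the absence of $E$-configurations with an analysis of how the tiling extends across the whole plane. First I would observe that, as in the proof of Proposition~\ref{prop_1}, the inner angles of $C$ (if $C$ has a boundary) can only be $\tfrac{\pi}{3}$ or $\tfrac{2\pi}{3}$, so an unbounded convex $C$ is either a half-plane, a wedge of angle $\tfrac{\pi}{3}$ or $\tfrac{2\pi}{3}$, a strip, a half-strip, or all of $\mathbb{R}^2$; I would then rule out every case with nonempty boundary by the same edge restrictions (i) and (ii) from Proposition~\ref{prop_1}, pushing along an unbounded edge to force arbitrarily long segments of $\SKEL(\mathcal{T})$ that, together with the absence of shared sides along the boundary, must eventually produce an $E$-configuration — contradiction. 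This leaves $C=\mathbb{R}^2$.

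Next, working in $\mathbb{R}^2$, I would fix one tile $T_0$ and examine its three neighbours across its three sides. The key local lemma is: if no $E$-configuration occurs, then along any maximal straight line $\ell$ in $\SKEL(\mathcal{T})$ in one of the three directions $v_E, v_{NE}, v_{NW}$, the tiles sitting on $\ell$ on a given side have alternating orientation, and — crucially — consecutive tiles on the same side of $\ell$ cannot both have their bases on $\ell$ unless they meet at a single point, which together with the no-$E$-configuration hypothesis severely constrains how side lengths can change from one tile to the next. Concretely I expect to extract that around any vertex of the skeleton the local picture is one of a short explicit list, and that this forces the existence of a lattice direction along which the configuration of tile-sizes is eventually periodic. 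The natural way to organize this is to look at a maximal "strip" of tiles between two parallel skeleton lines and show it is a periodic staircase of the form visible in Figure~\ref{fig_1}, governed by a single parameter which I would call $\alpha$.

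The main obstacle — and the step requiring real care — is passing from the purely local constraints to the global rigidity: a priori the three families of parallel skeleton lines could interact in complicated aperiodic ways, and one must show that forbidding $E$-configurations kills all of them except the one-parameter family. I would handle this by propagation: start from $T_0$, determine its neighbours, and show by induction along each of the three directions that once two adjacent rows of the staircase are fixed, the entire half-plane on each side is determined, with the parameter $\alpha$ forced to be constant and to lie in $\left(0,\tfrac12\right]$ (values outside this range, or a non-constant parameter, immediately yield an $E$-configuration or a violation of convexity/covering). Gluing the two half-plane determinations along $T_0$ gives a tiling of all of $\mathbb{R}^2$ that agrees with Figure~\ref{fig_1} up to the similarity transformation carrying the three skeleton directions and the size of $T_0$ into standard position.

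Finally I would record the degenerate endpoints: $\alpha=\tfrac12$ gives the two-size tiling, and although $\alpha\downarrow 0$ formally degenerates to the hexagonal tiling by unit triangles, that limit is not itself a member of the family (it has no triangles of size $0$), so every tiling covered by the proposition genuinely has $\alpha\in\left(0,\tfrac12\right]$, matching the statement. Throughout, the phrase "does not contain an $E$-configuration" is used exactly as in Subsection~\ref{subsec_2}, and local finiteness is what allows the inductive propagation arguments to terminate at each finite stage; I would not need Theorem~\ref{thm_old} anywhere.
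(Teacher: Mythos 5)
Your first half is broadly in line with the paper: ruling out a nonempty boundary (the paper notes that local finiteness and unboundedness put a line or half-line in the boundary of $C$, along which $E$-configurations appear at once), and then a case analysis around maximal segments of $\SKEL(\mathcal{T})$ showing that the combinatorial (topological) structure of the tiling must be that of Figure~\ref{fig_1}, with tiles $T_{i,j},L_{i,j},R_{i,j}$ indexed by $\ZE$. Up to this point your sketch, though less detailed than the paper's Case~1/Case~2 analysis, follows the same route.

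The genuine gap is in the metric rigidity step, where you assert that a ``non-constant parameter'' is killed by local propagation, ``immediately'' yielding an $E$-configuration or a covering violation. This is precisely the hard part, and no local or inductive argument of the kind you describe can settle it. Once the topology is fixed, the only constraints the geometry imposes on the sizes are the three splitting identities for each $T_{i,j}$, and these are equivalent to the discrete mean-value property $3\DIAM(T_{i,j})=\DIAM(T_{i-1,j-1})+\DIAM(T_{i+1,j-1})+\DIAM(T_{i,j+2})$. This equation has many non-constant solutions --- every affine function of $(i,j)$ satisfies it --- so knowing ``two adjacent rows'' does not force constancy, and a non-constant solution does not create an $E$-configuration locally; the obstruction is global (positivity of the sizes on all of $\ZE$). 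The paper resolves this by interpreting the mean-value property as harmonicity for the random walk on the directed graph on $\ZE$ with steps $(-1,-1),(1,-1),(0,2)$, proving that this walk is recurrent via the return probabilities $\frac{(3m)!}{3^{3m}(m!)^3}$ and Stirling's formula, and invoking the theorem that a recurrent irreducible chain admits only constant non-negative harmonic functions. That Liouville-type input (the one non-elementary ingredient announced in the abstract) is what your proposal is missing, and without it the passage from the Figure~\ref{fig_1} topology to the one-parameter family with constant $\alpha\in\left(0,\frac{1}{2}\right]$ is unproved.
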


The proof of Proposition~\ref{prop_2} is based on two lemmas. The first one is elementary and concerns the topology of $\mathcal{T}$. The second one is given in the following subsection. We call two tilings $\mathcal{T}_1$ and $\mathcal{T}_2$ \emph{topologically equivalent} (or \emph{having the same topology}) if there is a bicontinuous bijection $\Phi: \bigcup \mathcal{T}_1 \rightarrow \bigcup \mathcal{T}_2$ such that $\mathcal{T}_2=\{\Phi(T): T \in \mathcal{T}_1\}$ (cf. \cite[p.~167]{gruenbaum1987}).

\begin{lemma}\label{lem_3}
Let $\mathcal{T}$ be a locally finite tiling of an unbounded convex set $C \subseteq \mathbb{R}^2$ by equilateral triangles and suppose that $\mathcal{T}$ does not contain an $E$-configuration. 
Then $C=\mathbb{R}^2$ and $\mathcal{T}$ contains a tiling $\mathcal{T}_0 \subseteq \mathcal{T}$ that has the same topology as the one from Figure~\ref{fig_1}.
\end{lemma}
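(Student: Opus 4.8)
\textbf{Proof plan for Lemma~\ref{lem_3}.}

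The plan is to start from a single triangle and show that its neighbours are forced, then propagate this forcing across the whole tiling. First I would fix an arbitrary tile $T \in \mathcal{T}$; call its side length $1$ after rescaling, and consider one of its sides $S$. Since $\mathcal{T}$ does not contain two triangles sharing a side (this follows because a shared side $[x,y]$ together with the opposite vertices of the two triangles produces an $E$-configuration of length $\|x-y\|$ with $\mu$ chosen so that the middle rung sits at one of the endpoints --- or, more carefully, because two triangles on the same side already contain the basis plus two outer rungs and one can always find a degenerate middle rung; in any case the absence of $E$-configurations rules out both shared sides and certain almost-shared configurations), the tile $T'$ on the other side of $S$ must be strictly larger than $T$ or strictly smaller than $T$, and its side along $S$ properly contains or is properly contained in $S$. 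The key combinatorial point is: if $T'$ is the larger of the two, then along the side of $T'$ through $S$ there sits $T$ (of size, say, $\alpha < 1$) and exactly one further triangle of size $1-\alpha$ making up the rest of that side, for otherwise we would again produce an $E$-configuration (three or more ``rungs'' sticking out from one straight edge). This is precisely the local picture of Figure~\ref{fig_1}: a large triangle flanked along one side by two smaller ones of sizes $\alpha$ and $1-\alpha$.

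Next I would show that this forces the global structure. Starting from the triangle $T_0$ of size $1$ whose existence of a neighbour we just analysed, the ``size-$1$'' triangles propagate: each size-$1$ triangle is adjacent along one of its sides to two triangles of sizes $\alpha$ and $1-\alpha$ (on the side that is ``split''), and along its two other sides it meets either the bases of further size-$1$ triangles or the short rungs of the $\alpha$-triangles --- and a short case analysis at each vertex of $T_0$, using local finiteness and the no-$E$-configuration hypothesis, shows that the only consistent continuation is the one in Figure~\ref{fig_1}. Concretely, I would build $\mathcal{T}_0$ as the sub-family consisting of all triangles reachable from $T_0$ by repeatedly passing to the forced neighbours, and argue by induction on combinatorial distance from $T_0$ that the union $\bigcup \mathcal{T}_0$ grows to cover an ever-larger region and that the incidence pattern of $\mathcal{T}_0$ is combinatorially isomorphic (hence, since all tiles are triangles meeting edge-to-vertex in the prescribed way, topologically equivalent via a piecewise-affine homeomorphism) to the tiling of Figure~\ref{fig_1}. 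Because $\mathcal{T}_0$ already tiles a region that is all of $\mathbb{R}^2$ (the Figure~\ref{fig_1} pattern is doubly periodic and covers the plane), and $\mathcal{T}_0 \subseteq \mathcal{T}$ with $\bigcup \mathcal{T} = C$ convex, we get $C = \mathbb{R}^2$ and $\mathcal{T}_0 \subseteq \mathcal{T}$ with the claimed topology.

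The main obstacle I anticipate is the bookkeeping in the local forcing argument: one must enumerate, at each vertex and along each side of a size-$1$ triangle, all the ways $\mathcal{T}$ could place tiles, and check that every option other than the Figure~\ref{fig_1} one creates an $E$-configuration --- including subtle near-misses where a rung almost lines up, which is exactly why the parameter $\mu$ in the definition \eqref{eq_1} is allowed to be arbitrary in $(0,1)$. Care is also needed to confirm that the homeomorphism built edge-by-edge is globally well-defined and bicontinuous; this is routine given local finiteness, but should be stated. A secondary subtlety is handling the boundary of $C$ before we know $C=\mathbb{R}^2$: a priori some of the forced neighbours might be missing because they would lie outside $C$, but then the side they were supposed to attach to would be a boundary side of $C$ having two tiles abutting it (or the wrong angle), which one rules out using convexity of $C$ together with the no-$E$-configuration condition --- this is where the hypothesis that $C$ is unbounded does real work, since a bounded $C$ genuinely admits the exceptional finite tilings of Figure~\ref{fig_3}.
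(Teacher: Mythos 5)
There are two genuine gaps. The first is your opening claim that the absence of $E$-configurations immediately rules out two triangles sharing a side. It does not: if two tiles share the side $[x,y]$, the skeleton near that side consists of $[x,y]$ together with the four remaining triangle sides emanating from $x$ and $y$; the two of these on a given side of $[x,y]$ are \emph{not parallel} (they are two sides of one triangle, converging at its apex), no segment leaves any interior point of $[x,y]$, and the definition \eqref{eq_1} requires a genuine middle rung at $\mu\lambda$ with $0<\mu<1$ and $\varepsilon>0$ --- there is no ``degenerate middle rung'' to appeal to. This is not a technicality: excluding a shared side is one of the two substantive steps in the paper's local analysis (Case 2 in part (B) of its proof, right-hand side of Figure~\ref{fig_4}), and it cannot be done by looking at the shared side alone; the paper first determines the structure along the neighbouring maximal segment $S_E$ (which falls under Case 1, Figure~\ref{fig_5}) and only then produces an $E$-configuration, with basis $[x_W,x^\ast]$ running through the apex of $T_{N,1}$ to the apex of the next tile $T^\ast$. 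Note also that Lemma~\ref{lem_2} lists ``no two triangles share a side'' as a separate hypothesis alongside the existence of an $E$-configuration, which already signals that the implication you assert is not available for free. Your subsequent forcing step (one tile on one side of a straight edge, exactly two on the other) is the right Case-1 statement, but you omit the equally necessary conclusion about the directions of the segments bounding a maximal skeleton segment at its two endpoints, which is what drives the paper's propagation Steps 1--3; as written, ``a short case analysis shows the only consistent continuation is Figure~\ref{fig_1}'' asserts rather than proves the inductive step.

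The second gap is the derivation of $C=\mathbb{R}^2$. You obtain it at the end from ``$\mathcal{T}_0$ already tiles a region that is all of $\mathbb{R}^2$ because the Figure~\ref{fig_1} pattern is doubly periodic''; but Lemma~\ref{lem_3} only yields the \emph{topology} of Figure~\ref{fig_1}, and a topologically equivalent copy could a priori have shrinking tiles and cover a proper subset of the plane --- excluding exactly that is the content of Lemma~\ref{lem_4}, proved afterwards via the harmonicity/recurrence argument, so you cannot invoke it here. The paper instead proves $C=\mathbb{R}^2$ \emph{first}: if $C\ne\mathbb{R}^2$, local finiteness forces the boundary of $C$ into the skeleton, whose segments have only three directions, so the unbounded convex boundary contains a half-line or line, and the infinitely many junction points of tiles along it give parallel rungs and hence $E$-configurations. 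Your alternative boundary discussion (``a boundary side of $C$ having two tiles abutting it'') is not in itself contradictory --- the bounded tilings of Figure~\ref{fig_3} have exactly such sides and no $E$-configuration --- so unboundedness is not actually exploited in the way the argument needs. Establishing $C=\mathbb{R}^2$ up front also matters for your own construction, since otherwise the ``forced neighbours'' in your induction might be cut off by the boundary of $C$.
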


\begin{proof}
\emph{(A). Determination of $C$. } Suppose that $C \ne \mathbb{R}^2$. Since $\mathcal{T}$ is locally finite and $C$ is unbounded, the boundary of $C$ contains a line or a half-line. There are infinitely many $E$-configurations having their bases in that line or half-line. So this case cannot appear, and we have $C=\mathbb{R}^2$.

\emph{(B). Topology of $\mathcal{T}$ in the neighbourhood of a maximal segment from $\SKEL(\mathcal{T})$. } Let $S$ be a line segment in $\SKEL(\mathcal{T})$ that is maximal under inclusion. By the argument of (A), $S$ is bounded.
W.l.o.g., $S=[x_W,x_E]$ is parallel to $v_E$ (see Subsection~\ref{subsec_2}) and $x_W$ and $x_E$ are the west and east endpoints of $S$, respectively. Because of maximality of $S$, the endpoints $x_W,x_E$ are relatively inner points of two maximal segments $S_W, S_E \subseteq \SKEL(\mathcal{T})$, and they are in the relative interiors of sides of triangles $T_W,T_E \in \mathcal{T}$, respectively (see the left-hand part of Figure~\ref{fig_4}; a grey half-disc indicates that this region belongs to one single tile). 
\begin{figure}
\begin{center}
\begin{tikzpicture}[xscale=.3,yscale=.52]

\draw
  (0,0) node {\begin{tikzpicture}
    \fill [fill=white]
      (0mm,0mm) circle (3mm)
      ;
    \fill [fill=black!30]
      (-1.25mm,2.165mm) arc (120:300:2.5mm)
      ;
  \end{tikzpicture}}
  ;

\draw
  (12,0) node {\begin{tikzpicture}
    \fill [fill=white]
      (0mm,0mm) circle (3mm)
      ;
    \fill [fill=black!30]
      (1.25mm,-2.165mm) arc (-60:120:2.5mm)
      ;
  \end{tikzpicture}}
  ;

\draw
  (0,0) -- (12,0) 
  (-.5,.5) --  (2,-2) -- (4,0) 
  (0,0) -- (2.5,2.5) -- (5,0)
  (7,0) -- (9.5,2.5) -- (12.5,-.5)
  (6,0) -- (9,-3) -- (12,0)
  (-.6,.6) -- (-.7,.7) 
  (-.8,.8) -- (-.9,.9)
  (2.1,-2.1) -- (2.2,-2.2) 
  (2.3,-2.3) -- (2.4,-2.4)
  (9.4,2.6) -- (9.3,2.7) 
  (9.2,2.8) -- (9.1,2.9)
  (12.6,-.6) -- (12.7,-.7) 
  (12.8,-.8) -- (12.9,-.9)

  (6,0) node[above] {$S$}
  (6,1.2) node {$\cdots$}
  (5,-1.2) node {$\cdots$}
 
  (-1.1,.9) node[above] {$S_W$}
  (-1.2,-.6) node {$T_W$}
  (13.1,-.9) node[below] {$S_E$}
  (13.5,.5) node {$T_E$}
  
  (2.5,.6) node {$T_{N,1}$}
  (9.5,.6) node {$T_{N,n_N}$}
  (2,-.6) node {$T_{S,1}$}
  (9,-.6) node {$T_{S,n_S}$}
  ;
  

\draw
  (25,1.46) node {\begin{tikzpicture}[xscale=.3,yscale=.52]

\draw
  (6,0) node {\begin{tikzpicture}
    \fill [fill=white]
      (0mm,0mm) circle (3mm)
      ;
    \fill [fill=black!30]
      (-1.25mm,2.165mm) arc (120:-60:2.5mm)
      ;
  \end{tikzpicture}}
  ;

\draw
  (3,3) node {\begin{tikzpicture}
    \fill [fill=white]
      (0mm,0mm) circle (3mm)
      ;
    \fill [fill=black!30]
      (1.25mm,2.165mm) arc (60:240:2.5mm)
      ;
  \end{tikzpicture}}
  ;

\draw
  (8,-2) node {\begin{tikzpicture}
    \fill [fill=white]
      (0mm,0mm) circle (3mm)
      ;
    \fill [fill=black!30]
      (-2.5mm,0mm) arc (-180:0:2.5mm)
      ;
  \end{tikzpicture}}
  ;

\draw
  (0,0) -- (6,0) -- (3,-3) -- (0,0) -- (5,5) -- (7,3)
  (3,3) -- (13,3) -- (8,-2) -- cycle
  (6.5,-2) -- (9.5,-2)

  (0,0) node[left] {$x_W$}
  (5,5) node[above] {$x^\ast$}
  (3,0) node[above] {$S$}
  (6.2,1) node {$S_E$}
  (3,1.5) node {$T_{N,1}$}
  (3,-1) node {$T_{S,1}$}
  (8.5,1.5) node {$T_E$}
  (5,3.66) node {$T^\ast$}  
  ;

  \end{tikzpicture}}
  ;

\end{tikzpicture}
\end{center}
\caption{Notations in the neighbourhood of a maximal segment $S \subseteq \SKEL(\mathcal{T})$ and
contradiction in Case 2.}
\label{fig_4}
\end{figure}
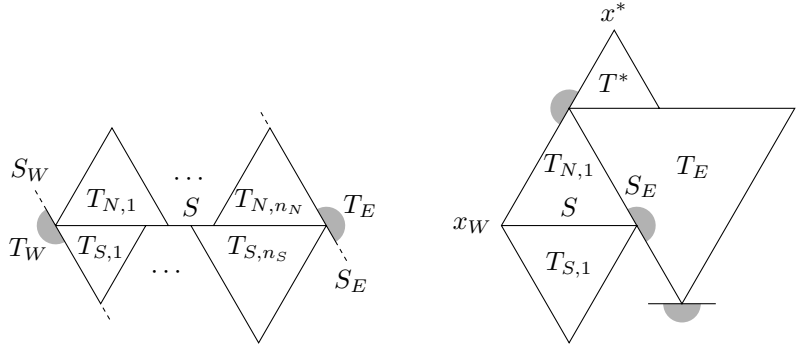

There are uniquely determined tiles $T_{N,1},\ldots,T_{N,n_N} \in \mathcal{T}$ and $T_{S,1},\ldots,T_{S,n_S} \in \mathcal{T}$ such that $S$ is the union of the south sides of $T_{N,1},\ldots,T_{N,n_N}$ as well as of the north sides of $T_{S,1},\ldots,T_{S,n_S}$. We can assume that 
$$
n_N \le n_S.
$$

\emph{Case 1: $n_S \ge 2$. } First note that $n_S \le 2$, since otherwise there would be an $E$-configuration with basis $S \cap \left(T_{S,1} \cup T_{S,2}\right)$. So we have $n_S=2$. Next note that $S_W$ must be parallel to $v_{NW}$ and $S_E$ parallel to $v_{NE}$, because otherwise we would again have an $E$-configuration with basis $S \cap \left(T_{S,1} \cup T_{S,2}\right)$. Then $n_N=1$, since $n_N \ge 2$ would give an $E$-configuration with basis $S \cap \left(T_{N,1} \cup T_{N,2}\right)$. So $n_N=1$, $n_S=2$, and the resulting topology is displayed in Figure~\ref{fig_5}.

\emph{Case 2: $n_S=1$. } Then $n_N=n_S=1$ and $S$ is a joint edge of $T_{N,1}$ and $T_{S,1}$. W.l.o.g., $S_E$ contains a side of $T_{N,1}$ (see the right-hand part of Figure~\ref{fig_4}). 
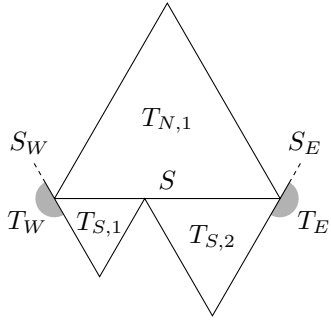
\begin{figure}
\begin{center}
\begin{tikzpicture}[xscale=.3,yscale=.52]

\draw
  (20,0) node {\begin{tikzpicture}
    \fill [fill=white]
      (0mm,0mm) circle (3mm)
      ;
    \fill [fill=black!30]
      (-1.25mm,2.165mm) arc (120:300:2.5mm)
      ;
  \end{tikzpicture}}
  ;

\draw
  (30,0) node {\begin{tikzpicture}
    \fill [fill=white]
      (0mm,0mm) circle (3mm)
      ;
    \fill [fill=black!30]
      (-1.25mm,-2.165mm) arc (-120:60:2.5mm)
      ;
  \end{tikzpicture}}
  ;

\draw
  (20,0) -- (30,0)
  (20,0) -- (25,5) -- (30,0)
  (19.5,.5) -- (22,-2) -- (24,0) -- (27,-3) -- (30.5,.5)
  (19.4,.6) -- (19.3,.7)
  (19.2,.8) -- (19.1,.9) 
  (30.6,.6) -- (30.7,.7)
  (30.8,.8) -- (30.9,.9) 

  (25,0) node[above] {$S$}
 
  (18.9,.9) node[above] {$S_W$}
  (18.8,-.6) node {$T_W$}
  (31.1,.9) node[above] {$S_E$}
  (31.5,-.6) node {$T_E$}
  
  (25,2) node {$T_{N,1}$}
  (22,-.66) node {$T_{S,1}$}
  (27,-1) node {$T_{S,2}$}
  ;

\end{tikzpicture}
\end{center}
\caption{Unique topology close to a maximal segment $S \subseteq \SKEL(\mathcal{T})$.}
\label{fig_5}
\end{figure}
The maximal segment $S_E$ contains sides of at least two triangles of $\mathcal{T}$, both being on the same side of $S_E$ as $S$. Hence $S_E$ is in the situation of Case 1 and the topology in its neighbourhood is as in Figure~\ref{fig_5}. In particular, $S_E$ is a side of $T_E$ and $S_E$ is bounded at its north west endpoint by a segment in direction $v_{NE}$ and at its south east endpoint by a segment in direction $v_{E}$. Let $T^\ast \in \mathcal{T}$ be the tile whose south west vertex coincides with the north vertex of $T_{N,1}$, and let $x^\ast$ be the north vertex of $T^\ast$. We obtain an $E$-configuration with basis $[x_W,x^\ast]$. This contradiction shows that Case 2 does not appear.

We have shown that \emph{the topology in a neighbourhood of every maximal line segment in $\SKEL(\mathcal{T})$ is as illustrated in Figure~\ref{fig_5}.}

\emph{(C). Construction of $\mathcal{T}_0$. } We shall identify triangles $T_{i,j},L_{i,j},R_{i,j} \in \mathcal{T}$ for $(i,j) \in \ZE=\{(i,j) \in \mathbb{Z}^2: i+j \text{ is even}\}$. The triangles $T_{i,j}$ are the equivalents of the triangles of size $1$ from Figure~\ref{fig_1}, so that $T_{i,j+2}$, $T_{i-1,j-1}$ and $T_{i+1,j-1}$ meet the north, south west and south east vertex of $T_{i,j}$. The triangles $L_{i,j}$ and $R_{i,j}$ are the equivalents of the triangles of sizes $\alpha$ and $1-\alpha$, respectively, which touch $T_{i,j}$ along their north sides.
The construction is illustrated in Figure~\ref{fig_6}. Pieces of information obtained in Steps 0, 1 and 2 are marked by \tikz{ \draw (0,0) circle (1.2mm) (0,-0) node {\scriptsize 0};}, \tikz{ \draw (0,0) circle (1.2mm) (0,0) node {\scriptsize 1};} and \tikz{ \draw (0,0) circle (1.2mm) (0,0) node {\scriptsize 2};}, respectively.
\begin{figure}
\begin{center}
\begin{tikzpicture}[xscale=.6,yscale=1.04]

\draw
  (0,2) node {\begin{tikzpicture}
    \fill [fill=white]
      (0mm,0mm) circle (3.5mm)
      ;
    \fill [fill=black!30]
      (3mm,0mm) arc (0:180:3mm)
      ;
  \end{tikzpicture}}
  ;
\draw
  (-3,1) node {\begin{tikzpicture}
    \fill [fill=white]
      (0mm,0mm) circle (3.5mm)
      ;
    \fill [fill=black!30]
      (3mm,0mm) arc (0:180:3mm)
      ;
  \end{tikzpicture}}
  ;
\draw
  (3,1) node {\begin{tikzpicture}
    \fill [fill=white]
      (0mm,0mm) circle (3.5mm)
      ;
    \fill [fill=black!30]
      (3mm,0mm) arc (0:180:3mm)
      ;
  \end{tikzpicture}}
  ;
\draw
  (-6,0) node {\begin{tikzpicture}
    \fill [fill=white]
      (0mm,0mm) circle (3.5mm)
      ;
    \fill [fill=black!30]
      (2.5mm,0mm) arc (0:180:2.5mm)
      ;
  \end{tikzpicture}}
  ;
\draw
  (-3,3) node {\begin{tikzpicture}
    \fill [fill=white]
      (0mm,0mm) circle (3.5mm)
      ;
    \fill [fill=black!30]
      (2.5mm,0mm) arc (0:180:2.5mm)
      ;
  \end{tikzpicture}}
  ;
\draw
  (0,4) node {\begin{tikzpicture}
    \fill [fill=white]
      (0mm,0mm) circle (3.5mm)
      ;
    \fill [fill=black!30]
      (2.5mm,0mm) arc (0:180:2.5mm)
      ;
  \end{tikzpicture}}
  ;
\draw
  (3,3) node {\begin{tikzpicture}
    \fill [fill=white]
      (0mm,0mm) circle (3.5mm)
      ;
    \fill [fill=black!30]
      (2.5mm,0mm) arc (0:180:2.5mm)
      ;
  \end{tikzpicture}}
  ;
\draw
  (6,0) node {\begin{tikzpicture}
    \fill [fill=white]
      (0mm,0mm) circle (3.5mm)
      ;
    \fill [fill=black!30]
      (2.5mm,0mm) arc (0:180:2.5mm)
      ;
  \end{tikzpicture}}
  ;

\draw
  (2,0) node {\begin{tikzpicture}
    \fill [fill=white]
      (0mm,0mm) circle (3.5mm)
      ;
    \fill [fill=black!30]
      (1.5mm,2.6mm) arc (60:-120:3mm)
      ;
  \end{tikzpicture}}
  ;
\draw
  (-1,-1) node {\begin{tikzpicture}
    \fill [fill=white]
      (0mm,0mm) circle (3.5mm)
      ;
    \fill [fill=black!30]
      (1.5mm,2.6mm) arc (60:-120:3mm)
      ;
  \end{tikzpicture}}
  ;
\draw
  (2,2) node {\begin{tikzpicture}
    \fill [fill=white]
      (0mm,0mm) circle (3.5mm)
      ;
    \fill [fill=black!30]
      (1.5mm,2.6mm) arc (60:-120:3mm)
      ;
  \end{tikzpicture}}
  ;
\draw
  (-4,-2) node {\begin{tikzpicture}
    \fill [fill=white]
      (0mm,0mm) circle (3.5mm)
      ;
    \fill [fill=black!30]
      (1.25mm,2.165mm) arc (60:-120:2.5mm)
      ;
  \end{tikzpicture}}
  ;
\draw
  (2,-2) node {\begin{tikzpicture}
    \fill [fill=white]
      (0mm,0mm) circle (3.5mm)
      ;
    \fill [fill=black!30]
      (1.25mm,2.165mm) arc (60:-120:2.5mm)
      ;
  \end{tikzpicture}}
  ;
\draw
  (5,-1) node {\begin{tikzpicture}
    \fill [fill=white]
      (0mm,0mm) circle (3.5mm)
      ;
    \fill [fill=black!30]
      (1.25mm,2.165mm)arc (60:-120:2.5mm)
      ;
  \end{tikzpicture}}
  ;
\draw
  (5,1) node {\begin{tikzpicture}
    \fill [fill=white]
      (0mm,0mm) circle (3.5mm)
      ;
    \fill [fill=black!30]
      (1.25mm,2.165mm) arc (60:-120:2.5mm)
      ;
  \end{tikzpicture}}
  ;
\draw
  (2,4) node {\begin{tikzpicture}
    \fill [fill=white]
      (0mm,0mm) circle (3.5mm)
      ;
    \fill [fill=black!30]
      (1.25mm,2.165mm) arc (60:-120:2.5mm)
      ;
  \end{tikzpicture}}
  ;

\draw
  (-2,0) node {\begin{tikzpicture}
    \fill [fill=white]
      (0mm,0mm) circle (3.5mm)
      ;
    \fill [fill=black!30]
      (-1.5mm,2.6mm) arc (120:300:3mm)
      ;
  \end{tikzpicture}}
  ;
\draw
  (-2,2) node {\begin{tikzpicture}
    \fill [fill=white]
      (0mm,0mm) circle (3.5mm)
      ;
    \fill [fill=black!30]
      (-1.5mm,2.6mm) arc (120:300:3mm)
      ;
  \end{tikzpicture}}
  ;
\draw
  (1,-1) node {\begin{tikzpicture}
    \fill [fill=white]
      (0mm,0mm) circle (3.5mm)
      ;
    \fill [fill=black!30]
      (-1.5mm,2.6mm) arc (120:300:3mm)
      ;
  \end{tikzpicture}}
  ;
\draw
  (-2,4) node {\begin{tikzpicture}
    \fill [fill=white]
      (0mm,0mm) circle (3.5mm)
      ;
    \fill [fill=black!30]
      (-1.25mm,2.165mm) arc (120:300:2.5mm)
      ;
  \end{tikzpicture}}
  ;
\draw
  (-5,1) node {\begin{tikzpicture}
    \fill [fill=white]
      (0mm,0mm) circle (3.5mm)
      ;
    \fill [fill=black!30]
      (-1.25mm,2.165mm) arc (120:300:2.5mm)
      ;
  \end{tikzpicture}}
  ;
\draw
  (-5,-1) node {\begin{tikzpicture}
    \fill [fill=white]
      (0mm,0mm) circle (3.5mm)
      ;
    \fill [fill=black!30]
      (-1.25mm,2.165mm) arc (120:300:2.5mm)
      ;
  \end{tikzpicture}}
  ;
\draw
  (-2,-2) node {\begin{tikzpicture}
    \fill [fill=white]
      (0mm,0mm) circle (3.5mm)
      ;
    \fill [fill=black!30]
      (-1.25mm,2.165mm) arc (120:300:2.5mm)
      ;
  \end{tikzpicture}}
  ;
\draw
  (4,-2) node {\begin{tikzpicture}
    \fill [fill=white]
      (0mm,0mm) circle (3.5mm)
      ;
    \fill [fill=black!30]
      (-1.25mm,2.165mm) arc (120:300:2.5mm)
      ;
  \end{tikzpicture}}
  ;

\draw
  (-8,-2) -- (-4,-2) -- (-6,0) -- cycle
  (-5,-1) -- (-1,-1) -- (-3,1) -- cycle
  (-5,1) -- (-1,1) -- (-3,3) -- cycle
  (-2,-2) -- (2,-2) -- (0,0) -- cycle
  (-2,0) -- (2,0) -- (0,2) -- cycle
  (-2,2) -- (2,2) -- (0,4) -- cycle
  (-2,4) -- (2,4) -- (0,6) -- cycle
  (1,-1) -- (5,-1) -- (3,1) -- cycle
  (1,1) -- (5,1) -- (3,3) -- cycle
  (4,-2) -- (8,-2) -- (6,0) -- cycle

  (-4.3,-2.3) -- (-3,-1) -- (-1.7,-2.3)
  (1.7,-2.3) -- (3,-1) -- (4.3,-2.3)
  (6.6,0) -- (4,0) -- (5.3,1.3)
  (3.6,3) -- (1,3) -- (2.3,4.3)
  (-6.6,0) -- (-4,0) -- (-5.3,1.3)
  (-3.6,3) -- (-1,3) -- (-2.3,4.3)

  (-6,-1.5) node {$T_{-2,-2}$}
  (-3,-.5) node {$T_{-1,-1}$}
  (-3,1.5) node {$T_{-1,1}$}
  (0,-1.5) node {$T_{0,-2}$}
  (0,.5) node {$T_{0,0}$}
  (0,2.5) node {$T_{0,2}$}
  (0,4.5) node {$T_{0,4}$}
  (3,-.5) node {$T_{1,-1}$}
  (3,1.5) node {$T_{1,1}$}
  (6,-1.5) node {$T_{2,-2}$}
  
  (-4,-1.17) node {\scriptsize $L_{-1,-1}$}
  (-2,-1.17) node {\scriptsize $R_{-1,-1}$}
  (-4,.83) node {\scriptsize $L_{-1,1}$}
  (-2,.83) node {\scriptsize $R_{-1,1}$}
  (-1,-.17) node {\scriptsize $L_{0,0}$}
  (1,-.17) node {\scriptsize $R_{0,0}$}
  (-1,1.83) node {\scriptsize $L_{0,2}$}
  (1,1.83) node {\scriptsize $R_{0,2}$}
  (-1,3.83) node {\scriptsize $L_{0,4}$}
  (1,3.83) node {\scriptsize $R_{0,4}$}
  (4,-1.17) node {\scriptsize $R_{1,-1}$}
  (2,-1.17) node {\scriptsize $L_{1,-1}$}
  (4,.83) node {\scriptsize $R_{1,1}$}
  (2,.83) node {\scriptsize $L_{1,1}$}
  (5,-.17) node {\scriptsize $L_{2,0}$}
  (-5,-.17) node {\scriptsize $R_{-2,0}$}
  (2,2.83) node {\scriptsize $L_{1,3}$}
  (-2,2.83) node {\scriptsize $R_{-1,3}$}
  ;
 
\draw (-2.22,-.07) node{\tikz{ \draw (0,0) circle (1.2mm) (0,0) node {\scriptsize 0};}};
\draw (2.22,-.07) node{\tikz{ \draw (0,0) circle (1.2mm) (0,0) node {\scriptsize 0};}};
\draw (0,1.4) node{\tikz{ \draw (0,0) circle (1.2mm) (0,0) node {\scriptsize 0};}};
\draw (1,-.65) node{\tikz{ \draw (0,0) circle (1.2mm) (0,0) node {\scriptsize 0};}};
\draw (-1,-.65) node{\tikz{ \draw (0,0) circle (1.2mm) (0,0) node {\scriptsize 0};}};
 
\draw (0,2.15) node{\tikz{ \draw (0,0) circle (1.2mm) (0,0) node {\scriptsize 1};}};
\draw (-1,1.35) node{\tikz{ \draw (0,0) circle (1.2mm) (0,0) node {\scriptsize 1};}};
\draw (1,1.35) node{\tikz{ \draw (0,0) circle (1.2mm) (0,0) node {\scriptsize 1};}};
\draw (-2,.35) node{\tikz{ \draw (0,0) circle (1.2mm) (0,0) node {\scriptsize 1};}};
\draw (2,.35) node{\tikz{ \draw (0,0) circle (1.2mm) (0,0) node {\scriptsize 1};}};
 
\draw (-3,.4) node{\tikz{ \draw (0,0) circle (1.2mm) (0,0) node {\scriptsize 2};}};
\draw (3,.4) node{\tikz{ \draw (0,0) circle (1.2mm) (0,0) node {\scriptsize 2};}};
\draw (0,3.4) node{\tikz{ \draw (0,0) circle (1.2mm) (0,0) node {\scriptsize 2};}};
\draw (-2.22,1.93) node{\tikz{ \draw (0,0) circle (1.2mm) (0,0) node {\scriptsize 2};}};
\draw (2.22,1.93) node{\tikz{ \draw (0,0) circle (1.2mm) (0,0) node {\scriptsize 2};}};
\draw (-3,1.15) node{\tikz{ \draw (0,0) circle (1.2mm) (0,0) node {\scriptsize 2};}};
\draw (3,1.15) node{\tikz{ \draw (0,0) circle (1.2mm) (0,0) node {\scriptsize 2};}};
\draw (-.78,-1.07) node{\tikz{ \draw (0,0) circle (1.2mm) (0,0) node {\scriptsize 2};}};
\draw (.78,-1.07) node{\tikz{ \draw (0,0) circle (1.2mm) (0,0) node {\scriptsize 2};}};
 
\end{tikzpicture}
\end{center}
\caption{Construction of $\mathcal{T}_0$.}
\label{fig_6}
\end{figure}
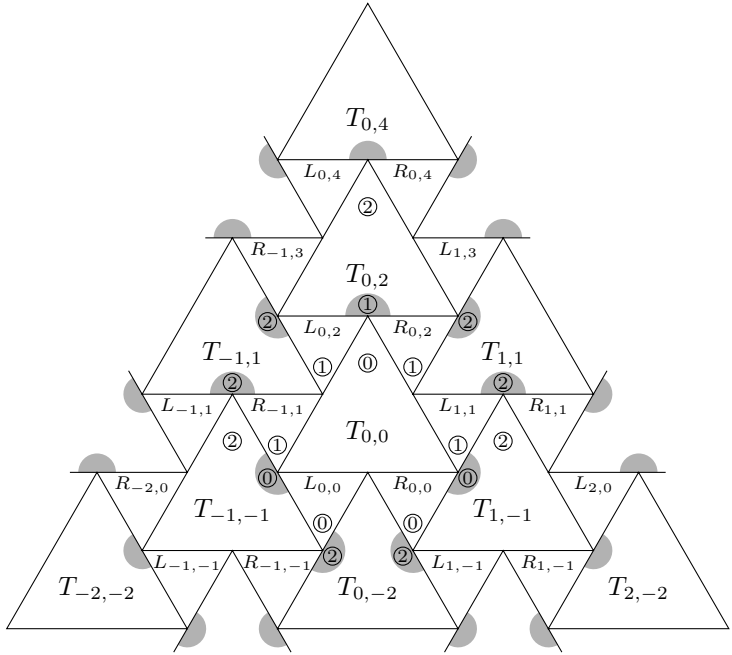

\emph{Step 0. } We pick a maximal segment $S \subseteq \SKEL(\mathcal{T})$. By part (B), we can assume that the situation close to $S$ is as in Figure~\ref{fig_5}. We define $T_{0,0}=T_{N,1}$, $L_{0,0}=T_{S,1}$ and $R_{0,0}=T_{S,2}$.

\emph{Step 1. } The north east side $S_1$ of $T_{0,0}$ is parallel to $v_{NW}$ and is bounded at its south east endpoint by a segment in direction $v_{NE}$. By (B), this implies that $S_1$ is a maximal line segment of $\SKEL(\mathcal{T})$, $S_1$ is bounded at its north west endpoint by a segment in direction $v_E$, and $S_1$ is the union of two sides of triangles $L_{1,1},R_{0,2} \in \mathcal{T}$ opposite to $T_{0,0}$ (where $R_{0,2}$ is in the north west of $L_{1,1}$). 
Similarly, the north west side of $T_{0,0}$ is the union of two sides of triangles $R_{-1,1},L_{0,2} \in \mathcal{T}$ opposite to $T_{0,0}$.

\emph{Step 2. } Now consider the segment $S_2$ formed by the collinear sides of $L_{0,2}$ and $R_{0,2}$. By (B), $S_2$ is a maximal segment in $\SKEL(\mathcal{T})$, $S_2$ is bounded at its west endpoint by a segment in direction $v_{NW}$ and at its east endpoint by a segment in direction $v_{NE}$, and $S_2$ is a side of some $T_{0,2} \in \mathcal{T}$ opposite to $L_{0,2}$ and $R_{0,2}$. Similar considerations apply to segments formed by collinear sides of $R_{-1,1}$ and $L_{0,0}$ as well as of $R_{0,0}$ and $L_{1,1}$. In particular, we obtain tiles $T_{-1,-1}$ opposite to $R_{-1,1}$ and $L_{0,0}$ and $T_{1,-1}$ opposite to $R_{0,0}$ and $L_{1,1}$.

\emph{Step 3. } Now we know that one side of each of $T_{0,2}$, $T_{-1,-1}$ and $T_{1,-1}$ is in the same topological situation as was the initial segment $S$ with respect to $T_{0,0}$ after Step 0. We repeat Steps 1 and 2 with these three maximal segments and arrive at the topological structure displayed in Figure~\ref{fig_6}.

Iterating this procedure, we get the desired tiling $\mathcal{T}_0 \subseteq \mathcal{T}$. 
\end{proof}


\subsection{The remaining topological type and a random walk}

We associate a directed graph $\Gamma=\left(\ZE,E\right)$ with vertex set $\ZE$ to the triangles $T_{i,j} \in \mathcal{T}_0$ constructed above. Two vertices $(i,j),(k,l) \in \ZE$ are connected by an edge, $[(i,j),(k,l)] \in E$, if and only if some vertex of $T_{i,j}$ belongs to $\BD(T_{k,l})$. That is,
$$
[(i,j),(k,l)] \in E \quad\Longleftrightarrow\quad (k,l) \in \{(i-1,j-1),(i+1,j-1),(i,j+2)\}
$$
(see Figure~\ref{fig_7}).
\begin{figure}

\begin{tikzpicture}[xscale=2.2,yscale=.6]

\draw
  (-2,-2) node {$(-2,-2)$}
  (-2,0) node {$(-2,0)$}
  (-2,2) node {$(-2,2)$}
  (-1,1) node {$(-1,1)$}
  (-1,-1) node {$(-1,-1)$}
  (-1,-3) node {$(-1,-3)$}
  (0,-2) node {$(0,-2)$}
  (0,0) node {$(0,0)$}
  (0,2) node {$(0,2)$}
  (1,1) node {$(1,1)$}
  (1,-1) node {$(1,-1)$}
  (1,-3) node {$(1,-3)$}
  (2,-2) node {$(2,-2)$}
  (2,0) node {$(2,0)$}
  (2,2) node {$(2,2)$}
  ;

\draw[<-] (-2,1.5) -- (-2,.5); 
\draw[<-] (-2,-.5) -- (-2,-1.5); 
\draw[<-] (-1,.5) -- (-1,-.5); 
\draw[<-] (-1,-1.5) -- (-1,-2.5); 
\draw[<-] (0,1.5) -- (0,.5); 
\draw[<-] (0,-.5) -- (0,-1.5); 
\draw[<-] (1,.5) -- (1,-.5); 
\draw[<-] (1,-1.5) -- (1,-2.5); 
\draw[<-] (2,1.5) -- (2,.5); 
\draw[<-] (2,-.5) -- (2,-1.5); 

\draw[->] (-1.7,1.7) -- (-1.3,1.3); 
\draw[->] (-1.3,.7) -- (-1.7,.3); 
\draw[->] (-1.7,-.3) -- (-1.3,-.7); 
\draw[->] (-1.3,-1.3) -- (-1.7,-1.7); 
\draw[->] (-1.7,-2.3) -- (-1.3,-2.7); 

\draw[->] (-.3,1.7) -- (-.7,1.3); 
\draw[->] (-.7,.7) -- (-.3,.3); 
\draw[->] (-.3,-.3) -- (-.7,-.7); 
\draw[->] (-.7,-1.3) -- (-.3,-1.7); 
\draw[->] (-.3,-2.3) -- (-.7,-2.7); 

\draw[->] (.3,1.7) -- (.7,1.3); 
\draw[->] (.7,.7) -- (.3,.3); 
\draw[->] (.3,-.3) -- (.7,-.7); 
\draw[->] (.7,-1.3) -- (.3,-1.7); 
\draw[->] (.3,-2.3) -- (.7,-2.7); 

\draw[->] (1.7,1.7) -- (1.3,1.3); 
\draw[->] (1.3,.7) -- (1.7,.3); 
\draw[->] (1.7,-.3) -- (1.3,-.7); 
\draw[->] (1.3,-1.3) -- (1.7,-1.7); 
\draw[->] (1.7,-2.3) -- (1.3,-2.7); 

\end{tikzpicture}

\caption{The directed graph $\Gamma=\left(\ZE,E\right)$.}
\label{fig_7}
\end{figure}
We consider the random walk on $\Gamma$ with transition probability $p((i,j),(k,l))=\frac{1}{3}$ for $[(i,j),(k,l)] \in E$ (and in turn $p((i,j),(k,l))=0$ for $[(i,j),(k,l)] \notin E$). This is described by the Markov chain $\left(\ZE,P\right)$ with state space $\ZE$ and transition matrix $P=(p((i,j),(k,l)))_{(i,j),(k,l) \in \ZE}$ (cf. \cite[pp. 3--11]{woess2009}).

Note that, for any two states $(i,j),(k,l) \in \ZE$, there is a path from $(i,j)$ to $(k,l)$ in $\Gamma$. Equivalently, the probability to reach $(k,l)$ within finitely many steps when starting at $(i,j)$ is positive. Such Markov chains are called \emph{irreducible} \cite[p. 28]{woess2009}.

The probability $p^{(n)}((i,j),(k,l))$ of reaching the state $(k,l)$ from the state $(i,j)$ in exactly $n$ steps, $n \in \{0,1,\ldots\}$, is the respective entry of the $n$th power $P^n$ of $P$ \cite[pp. 12--13, Lemma 1.21(a)]{woess2009}. That is,
$$
p^{(n)}((i,j),(k,l))= \sum_\pi p((i,j),(i_1,j_1))p((i_1,j_1),(i_2,j_2))\ldots p((i_{n-1},j_{n-1}),(k,l))
$$
with summation over all paths $\pi= [(i,j),(i_1,j_1),(i_2,j_2),\ldots,(i_{n-1},j_{n-1}),(k,l)]$ of length $n$ in $\Gamma$ (i.e.,
$[(i,j),(i_1,j_1)],[(i_1,j_1),(i_2,j_2)],\ldots,[(i_{n-1},j_{n-1}),(k,l)] \in E$).

In particular, the probability $p^{(n)}((i,j),(i,j))$ of returning from $(i,j)$ to $(i,j)$ in exactly $n$ steps is
$$
p^{(n)}((i,j),(i,j))= \sum_\pi \left(\frac{1}{3}\right)^n
$$
with summation over all cycles $\pi$ of length $n$ that start at $(i,j)$.

Recall that every edge in $E$ is of the form $[(k,l),(k,l)+(-1,-1)]$, $[(k,l),(k,l)+(1,-1)]$ or $[(k,l),(k,l)+(0,2)]$. Since the vectors $(-1,-1)$, $(1,-1)$ and $(0,2)$ are pairwise linearly independent, every cycle must have length $n=3m$ for some $m\in\{0,1,\dots\}$ and contain $m$ edges of each of these three types. Therefore the number of cycles of length $3m$ starting at $(i,j)$ is $\frac{(3m)!}{m!m!m!}$, which counts the possible consecutive orders of these three types of edges. Thus
$$
p^{(n)}((i,j),(i,j))=\begin{cases}
\frac{(3m)!}{3^{3m}m!m!m!} & \text{if } n=3m,\; m \in \{0,1,\ldots\},\\
0 & \text{otherwise.}
\end{cases}
$$

The expected number of visits of the state $(i,j)$, provided the random walk starts at $(i,j)$, is
$$
G((i,j),(i,j))= \sum_{n=0}^\infty p^{(n)}((i,j),(i,j))= \sum_{m=0}^\infty \frac{(3m)!}{3^{3m}(m!)^3}
$$
\cite[p. 15]{woess2009}. Using Stirling's estimates $\sqrt{2\pi}m^{m+\frac{1}{2}}e^{-m} \le m! \le e m^{m+\frac{1}{2}}e^{-m}$ for $m \in \{1,2,\ldots\}$, we obtain
$$
G((i,j),(i,j)) \ge \sum_{m=1}^\infty \frac{\sqrt{2\pi} (3m)^{3m+\frac{1}{2}}e^{-3m}}{3^{3m} \left(em^{m+\frac{1}{2}}e^{-m}\right)^3} = \frac{\sqrt{6\pi}}{e^3}\sum_{m=1}^\infty \frac{1}{m}= \infty
$$
for all $(i,j) \in \ZE$.

A state $(i,j)$ is called \emph{recurrent} if $G((i,j),(i,j))=\infty$ \cite[Theorem 3.4(a) and Definition 3.1]{woess2009}, and a Markov chain is called recurrent if all its states are recurrent. An irreducible Markov chain is recurrent if and only if one of its states is recurrent \cite[pp. 45--46, 48]{woess2009}. 

\begin{lemma}
The Markov chain $\left(\ZE,P\right)$ is recurrent.
\end{lemma}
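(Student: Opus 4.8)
The plan is to read off the conclusion from the computations already carried out in the discussion preceding the lemma. First I would recall the explicit formula for the return probabilities: every cycle of the random walk based at $(i,j)$ must use each of the three displacement vectors $(-1,-1)$, $(1,-1)$, $(0,2)$ the same number $m$ of times, since these vectors are pairwise linearly independent, so $p^{(n)}((i,j),(i,j))$ vanishes unless $n=3m$ and then equals $\frac{(3m)!}{3^{3m}(m!)^3}$. Summing over $n$ gives the Green function
$$
G((i,j),(i,j))=\sum_{m=0}^\infty \frac{(3m)!}{3^{3m}(m!)^3}.
$$

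Next I would apply the Stirling estimates $\sqrt{2\pi}\,m^{m+\frac12}e^{-m}\le m!\le e\,m^{m+\frac12}e^{-m}$ (as done above) to bound the $m$-th term below by $\frac{\sqrt{6\pi}}{e^3}\cdot\frac1m$; divergence of the harmonic series then yields $G((i,j),(i,j))=\infty$ for every $(i,j)\in\ZE$. By the recurrence criterion for Markov chains, a state is recurrent exactly when its Green function is infinite, so in particular some (indeed every) state of $\left(\ZE,P\right)$ is recurrent. Finally, since the chain is irreducible — any two vertices of $\Gamma$ are joined by a directed path, so every state is reached from every other with positive probability — recurrence of one state propagates to all of them, and the chain is recurrent.

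The genuine content here is the combinatorial observation that the three edge-directions are linearly independent, which collapses the count of length-$3m$ return paths to the single multinomial coefficient $\frac{(3m)!}{(m!)^3}$; this is precisely the point that makes the Green function behave like $\sum 1/m$ rather than converge. That step, together with the Stirling estimate, has already been dispatched in the text, so no further obstacle remains: the lemma is simply the assembly of \emph{explicit return-probability formula} $+$ \emph{Stirling lower bound} $+$ \emph{irreducibility} via the standard recurrence dichotomy.
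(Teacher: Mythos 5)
Your proposal is correct and is essentially identical to the paper's argument: the paper likewise obtains the return probabilities $p^{(3m)}((i,j),(i,j))=\frac{(3m)!}{3^{3m}(m!)^3}$ from the linear independence of the three step vectors, applies the Stirling bounds to get $G((i,j),(i,j))\ge\frac{\sqrt{6\pi}}{e^3}\sum_{m\ge1}\frac1m=\infty$, and invokes irreducibility to pass from recurrence of one state to recurrence of the whole chain. No gaps; the lemma is indeed just the assembly of these already-established facts.
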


A function $f:\ZE \to \mathbb{R}$ is called \emph{harmonic} with respect to $\left(\ZE,P\right)$ if
it satisfies the mean value property
$f(i,j)=\sum_{(k,l) \in \ZE} p((i,j),(k,l))f(k,l)$ for all $(i,j) \in \ZE$ \cite[Definition 6.13]{woess2009}; i.e.,
\begin{equation}\label{eq_harmonic}
f(i,j)= \frac{1}{3}\big(f(i-1,j-1)+f(i+1,j-1)+f(i,j+2)\big).
\end{equation}
By \cite[Theorem 6.21]{woess2009}, every non-negative harmonic function of a recurrent irreducible Markov chain is constant.

\begin{corollary}\label{cor_1}
Every non-negative harmonic function of $\left(\ZE,P\right)$ is constant.
\end{corollary}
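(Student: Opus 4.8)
The plan is to obtain the corollary immediately from the preceding Lemma together with the structural observations about $\left(\ZE,P\right)$ already recorded above. First I would recall that $\left(\ZE,P\right)$ is irreducible: it was noted when $\Gamma$ was introduced that any two states are joined by a directed path in $\Gamma$, so the probability of reaching one from the other in finitely many steps is positive. Next I would invoke the Lemma, which asserts precisely that $\left(\ZE,P\right)$ is recurrent. Then the result \cite[Theorem~6.21]{woess2009}, stating that every non-negative harmonic function of a recurrent irreducible Markov chain is constant, applies verbatim to $\left(\ZE,P\right)$ and yields the claim; harmonicity with respect to $\left(\ZE,P\right)$ is exactly the mean value property (\ref{eq_harmonic}).

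In other words, the corollary carries no new content beyond checking the two hypotheses of \cite[Theorem~6.21]{woess2009}, and both are already in place — irreducibility from the definition of $\Gamma$, recurrence from the Lemma. Consequently there is no genuine obstacle here; this step merely packages the consequence about harmonic functions in the form that will be needed in the proof of Proposition~\ref{prop_2}.

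If one prefers a self-contained argument in place of the citation, I would sketch the standard probabilistic proof: for a non-negative harmonic function $f$ and the random walk $(X_n)$ on $\ZE$, the process $f(X_n)$ is a non-negative martingale and hence converges almost surely; recurrence guarantees that, from any starting point, the walk visits each fixed state $(k,l) \in \ZE$ infinitely often almost surely, so the almost sure limit of $f(X_n)$ must simultaneously equal $f(k,l)$ for every $(k,l)$, which forces $f$ to be constant. However, quoting \cite[Theorem~6.21]{woess2009} is the shortest route and is what I would use.
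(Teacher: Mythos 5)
Your proposal matches the paper exactly: the corollary is obtained by combining the already-noted irreducibility of $\left(\ZE,P\right)$ with the recurrence established in the preceding lemma and then citing \cite[Theorem~6.21]{woess2009}, which is precisely how the paper derives it. The optional martingale sketch is a correct standard argument but is not needed; the citation route is the paper's (and your) main proof, and it is complete.
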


We come back to tilings by equilateral triangles.

\begin{lemma}\label{lem_4}
If a tiling $\mathcal{T}_0$ by equilateral triangles has the same topology as the one from Figure~\ref{fig_1}, then $\mathcal{T}_0$ is the image of a tiling of the family depicted in Figure~\ref{fig_1} under a similarity transformation of the plane.
\end{lemma}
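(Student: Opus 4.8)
\emph{Strategy.} The plan is to show that the prescribed topology, together with the equilaterality of the tiles, forces all side lengths within each of the three classes $\{T_{i,j}\}$, $\{L_{i,j}\}$, $\{R_{i,j}\}$ to coincide, which then pins $\mathcal{T}_0$ down up to similarity. Using the topological equivalence with the tiling of Figure~\ref{fig_1}, label the tiles of $\mathcal{T}_0$ by $T_{i,j},L_{i,j},R_{i,j}$ for $(i,j)\in\ZE$ so that all incidences agree with those in Figure~\ref{fig_1}. Choose Cartesian coordinates so that $\SKEL(\mathcal{T}_0)$ uses only the directions $v_E,v_{NE},v_{NW}$; then every tile is either ``upward'' or ``downward''. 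Since two edge-adjacent equilateral triangles necessarily have opposite orientation and the adjacency pattern is that of Figure~\ref{fig_1}, we may assume (replacing the coordinates by their point reflection if necessary) that every $T_{i,j}$ is upward and every $L_{i,j}$, $R_{i,j}$ is downward. Because a tiling equivalence carries vertices of the induced cell complex to vertices and edges to edges, the following incidences, which hold in Figure~\ref{fig_1} (cf. Steps~0 and~1 in the proof of Lemma~\ref{lem_3}), persist in $\mathcal{T}_0$: the horizontal south side of $T_{i,j}$ splits into the north side of $L_{i,j}$ (to the west) and the north side of $R_{i,j}$; the north-east side of $T_{i,j}$ splits into a side of $R_{i,j+2}$ (upper part) and a side of $L_{i+1,j+1}$ (lower part); and the north-west side of $T_{i,j}$ splits into a side of $L_{i,j+2}$ (upper part) and a side of $R_{i-1,j+1}$ (lower part). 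Writing $t_{i,j},\ell_{i,j},r_{i,j}>0$ for the side lengths of $T_{i,j},L_{i,j},R_{i,j}$, these incidences read
\begin{align*}
t_{i,j}&=\ell_{i,j}+r_{i,j},\\
t_{i,j}&=r_{i,j+2}+\ell_{i+1,j+1},\\
t_{i,j}&=\ell_{i,j+2}+r_{i-1,j+1}.
\end{align*}

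\emph{Harmonicity.} Reindexing the second identity (replace $(i,j)$ by $(i-1,j-1)$) and the third (replace $(i,j)$ by $(i+1,j-1)$) gives $t_{i-1,j-1}=r_{i-1,j+1}+\ell_{i,j}$ and $t_{i+1,j-1}=\ell_{i+1,j+1}+r_{i,j}$; adding these two and using the first identity yields $t_{i-1,j-1}+t_{i+1,j-1}=t_{i,j}+\ell_{i+1,j+1}+r_{i-1,j+1}$. On the other hand, adding the second and third identities and using the first identity at $(i,j+2)$ gives $\ell_{i+1,j+1}+r_{i-1,j+1}=2t_{i,j}-t_{i,j+2}$. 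Combining the two, $t_{i-1,j-1}+t_{i+1,j-1}+t_{i,j+2}=3t_{i,j}$ for all $(i,j)\in\ZE$; that is, the function $(i,j)\mapsto t_{i,j}$ is harmonic in the sense of (\ref{eq_harmonic}). Being moreover positive, it is constant by Corollary~\ref{cor_1}, say $t_{i,j}\equiv t$.

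\emph{The remaining lengths and identification.} Substituting $t_{i,j}\equiv t$ into the second identity and into the first identity at $(i,j+2)$ gives $\ell_{i+1,j+1}=t-r_{i,j+2}=\ell_{i,j+2}$; doing the same with the third identity and the first identity at $(i-1,j+1)$ gives $\ell_{i-1,j+1}=t-r_{i-1,j+1}=\ell_{i,j+2}$. Hence $\ell$ is invariant under the translations $(i,j)\mapsto(i+1,j-1)$ and $(i,j)\mapsto(i-1,j-1)$, and these generate the full translation group of $\ZE$, so $\ell_{i,j}\equiv\ell$ and $r_{i,j}\equiv r=t-\ell$ are constant as well (alternatively, $\ell$ and $r$ are positive harmonic functions, and Corollary~\ref{cor_1} applies again). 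After a homothety we may assume $t=1$, and after possibly reflecting in a vertical line---which interchanges the roles of the $L$- and $R$-tiles---we may assume $\ell\le r$; put $\alpha:=\ell\in\left(0,\tfrac12\right]$, so $r=1-\alpha$. Now every tile of $\mathcal{T}_0$ has the same orientation and the same size as the corresponding tile of the $\alpha$-member of the family of Figure~\ref{fig_1}. Finally, the common edge directions, the orientations, all tile sizes, and the position of the single tile $T_{0,0}$ determine the position of every other tile of $\mathcal{T}_0$ uniquely by step-by-step propagation along the incidences above (first $L_{0,0},R_{0,0}$ underneath $T_{0,0}$, then $T_{-1,-1},T_{1,-1},T_{0,2}$, and so on, exactly as in the construction of $\mathcal{T}_0$ in Lemma~\ref{lem_3}, using that $\Gamma$ is strongly connected); since the $\alpha$-member of Figure~\ref{fig_1} satisfies exactly the same constraints, $\mathcal{T}_0$ is its image under the composition of the homothety, the possible reflection, and a translation, hence under a similarity transformation.

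\emph{Main obstacle.} The one genuinely delicate point is the bookkeeping in the first paragraph: recording correctly which of the two triangles abutting a side of $T_{i,j}$ occupies its upper and which its lower part, so that the three incidence identities---and their reindexed forms---come out with precisely the right shifts in $(i,j)$. Once those are established, Corollary~\ref{cor_1} does the essential work and everything else is a rigidity argument.
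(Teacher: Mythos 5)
Your proof is correct and follows essentially the same route as the paper: the three splitting identities for the sides of $T_{i,j}$, the derivation that $(i,j)\mapsto\DIAM(T_{i,j})$ satisfies (\ref{eq_harmonic}), constancy via Corollary~\ref{cor_1}, and then the translation-invariance of the $L$- and $R$-sizes yielding the parameter $\alpha\in\left(0,\tfrac12\right]$. The only difference is cosmetic bookkeeping (your algebraic rearrangement of the harmonicity computation and the explicit final rigidity/propagation step, which the paper leaves implicit).
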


\begin{proof}
Adopting notation from the above construction of $\mathcal{T}_0$ (see Figure~\ref{fig_6}), we suppose that
$\mathcal{T}_0$ consists of the triangles $T_{i,j},L_{i,j},R_{i,j}$ with $(i,j) \in \ZE$ and that the three sides of $T_{i,j}$ split into the collinear sides of $L_{i,j}$ and $R_{i,j}$, of $L_{i+1,j+1}$ and $R_{i,j+2}$ and of $L_{i,j+2}$ and $R_{i-1,j+1}$, respectively. In particular,
\begin{align}
\DIAM(T_{i,j}) &=\DIAM(L_{i,j})+\DIAM(R_{i,j}) \label{eq_sum1}\\ 
&=\DIAM(L_{i+1,j+1})+\DIAM(R_{i,j+2}) \label{eq_sum2}\\
&=\DIAM(L_{i,j+2})+\DIAM(R_{i-1,j+1}). \label{eq_sum3}
\end{align}
Summing these three equations and then applying (\ref{eq_sum2}) to $T_{i-1,j-1}$, (\ref{eq_sum3}) to $T_{i+1,j-1}$ and (\ref{eq_sum1}) to $T_{i,j+2}$, we obtain
\begin{align*}
3\DIAM(T_{i,j}) &=\big(\DIAM(L_{i,j})+\DIAM(R_{i-1,j+1})\big)+\big(\DIAM(L_{i+1,j+1})+\DIAM(R_{i,j})\big)\\
&\phantom{=(}
+\big(\DIAM(L_{i,j+2})+\DIAM(R_{i,j+2})\big)\\
&=\DIAM(T_{i-1,j-1})+\DIAM(T_{i+1,j-1})+\DIAM(T_{i,j+2}).
\end{align*}
Hence the function $f:\ZE \to \mathbb{R}$, $f(i,j)=\DIAM(T_{i,j})$, satisfies the harmonicity condition (\ref{eq_harmonic}). As $f(i,j)=\DIAM(T_{i,j})>0$ for all $(i,j)\in\ZE$, Corollary~\ref{cor_1} shows that $f$ is constant. Since we have to characterize $\mathcal{T}_0$ only up to similarity, we can assume that
\begin{equation}\label{eq_T}
\DIAM(T_{i,j})=1 \quad\mbox{for all}\quad (i,j) \in \ZE.
\end{equation}

Now
\begin{align}
\DIAM(L_{i,j+2}) &\stackrel{\text{(\ref{eq_sum3})}}{=} \DIAM(T_{i,j})-\DIAM(R_{i-1,j+1}) \nonumber\\
&\stackrel{\text{(\ref{eq_sum2})}}{=} \DIAM(T_{i,j})-\big(\DIAM(T_{i-1,j-1})-\DIAM(L_{i,j})\big) \nonumber\\
&\stackrel{\text{(\ref{eq_T})}}{=} \DIAM(L_{i,j})\label{eq_L1} 
\end{align}
and
\begin{align}
\DIAM(L_{i+1,j+1}) &\stackrel{\text{(\ref{eq_sum3})}}{=} \DIAM(T_{i+1,j-1})-\DIAM(R_{i,j}) \nonumber\\
&\stackrel{\text{(\ref{eq_sum1})}}{=} \DIAM(T_{i+1,j-1})-\big(\DIAM(T_{i,j})-\DIAM(L_{i,j})\big) \nonumber\\
&\stackrel{\text{(\ref{eq_T})}}{=} \DIAM(L_{i,j})\label{eq_L2}.
\end{align}
W.l.o.g., $\DIAM(L_{0,0}) \le \DIAM(R_{0,0})$. Then the parameter $\alpha=\DIAM(L_{0,0})$ satisfies
\begin{equation}\label{eq_alpha}
0 < \alpha \le \frac{1}{2},
\end{equation}
because $\DIAM(L_{0,0})+\DIAM(R_{0,0})\stackrel{\text{(\ref{eq_sum1})}}{=}\DIAM(T_{0,0})\stackrel{\text{(\ref{eq_T})}}{=}1$,
and the periodicities (\ref{eq_L1}) and (\ref{eq_L2}) yield
\begin{equation}\label{eq_L}
\DIAM(L_{i,j})=\alpha \quad\mbox{for all}\quad (i,j) \in \ZE.
\end{equation}
Finally, (\ref{eq_sum1}), (\ref{eq_T}) and (\ref{eq_L}) imply
\begin{equation}\label{eq_R}
\DIAM(R_{i,j})=1-\alpha \quad\mbox{for all}\quad (i,j) \in \ZE.
\end{equation}

Claims (\ref{eq_T}), (\ref{eq_L}), (\ref{eq_R}) and (\ref{eq_alpha}) prove Lemma~\ref{lem_4}.
\end{proof}

\begin{proof}[Proof of Proposition~\ref{prop_2}]
Lemmas~\ref{lem_3} and \ref{lem_4} show that $C=\mathbb{R}^2$ and that $\mathcal{T}$ contains a tiling $\mathcal{T}_0$ that is a similar image of one illustrated in Figure~\ref{fig_1}. We obtain
$\mathcal{T}=\mathcal{T}_0$, because $\bigcup\mathcal{T}_0=\mathbb{R}^2=\bigcup\mathcal{T}$.
\end{proof}


\subsection{Conclusion}

\begin{proof}[Proof of Theorem~\ref{thm_main}]
Lemmas~\ref{lem_1} and \ref{lem_2} discuss tilings that are not locally finite and locally finite tilings possessing $E$-configurations. Propositions~\ref{prop_1} and \ref{prop_2} exclude locally finite tilings without $E$-configurations for bounded and for unbounded convex sets, respectively, by showing that they cannot be perfect.
\end{proof}

\begin{proof}[Proof of Theorem~\ref{thm_shared-side}]
By Lemma \ref{lem_1}, the tiling $\mathcal{T}$ is necessarily locally finite. Lemma~\ref{lem_2} settles the case of tilings with $E$-configurations. Propositions~\ref{prop_1} and \ref{prop_2} discuss tilings without $E$-configurations.
\end{proof}

Finally, let us point out that Proposition~\ref{prop_2} and the proof of Lemma~\ref{lem_2} give rise to the following necessary condition in the context of Problem~\ref{prob}.

\begin{corollary}\label{cor_condition}
Every locally finite perfect tiling of an unbounded convex subset of the plane $\mathbb{R}^2$ by equilateral triangles possesses a sequence of $E$-configurations of strictly decreasing lengths.
\end{corollary}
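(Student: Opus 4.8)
The plan is to assemble Proposition~\ref{prop_2} and the iterative construction inside the proof of Lemma~\ref{lem_2}. Let $\mathcal{T}$ be a locally finite perfect tiling of an unbounded convex set $C \subseteq \mathbb{R}^2$ by equilateral triangles. Since a single equilateral triangle is bounded, $\mathcal{T}$ must consist of at least two tiles. Moreover, because all tiles of $\mathcal{T}$ have pairwise distinct diameters, no two of them can share a common (full) side: two equilateral triangles glued along a full side are congruent. Hence $\mathcal{T}$ meets all hypotheses of Lemma~\ref{lem_2} with the possible exception of containing an $E$-configuration.

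The first step is to verify that $\mathcal{T}$ does possess at least one $E$-configuration. If it did not, then by Proposition~\ref{prop_2} the tiling $\mathcal{T}$ would be a similarity image of one of the tilings of Figure~\ref{fig_1}. But each such tiling realizes only two or three distinct sizes among its infinitely many tiles, so it fails to be perfect, a contradiction. Therefore $\mathcal{T}$ contains an $E$-configuration, say $E_0$, of some length $\lambda_0 > 0$.

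The second step is to run the construction from the proof of Lemma~\ref{lem_2} starting from $E_0$. That argument, applied to $E_0$, yields an $E$-configuration $E_1$ of length $\lambda_1 = \DIAM(T_1)$ for a suitable tile $T_1 \in \mathcal{T}$, together with a further tile $T_2 \in \mathcal{T}$ with $\lambda_0 \ge \DIAM(T_1) + \DIAM(T_2)$; since $\DIAM(T_2) > 0$, we get $\lambda_1 < \lambda_0$. Since $E_1$ is again an $E$-configuration possessed by $\mathcal{T}$ and of positive length, the step applies anew, and iterating produces $E$-configurations $E_0, E_1, E_2, \ldots$ of lengths $\lambda_0 > \lambda_1 > \lambda_2 > \cdots$, which is exactly the asserted sequence. (In Lemma~\ref{lem_2} the stronger bound $\DIAM(T_2) \ge \INFDIAM(\mathcal{T})$ was used to force the lengths below $0$; here only the automatic strict inequality $\DIAM(T_2) > 0$ is needed.)

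The proof is thus a short reassembly of results already established, and I do not expect a genuine obstacle. The only point that needs a moment's attention is the bookkeeping in the first step, namely that perfectness simultaneously excludes the tilings of Figure~\ref{fig_1} (so that Proposition~\ref{prop_2} forces the existence of an $E$-configuration) and excludes two triangles sharing a side (so that the Lemma~\ref{lem_2} iteration is applicable); once these two observations are made, nothing further is required.
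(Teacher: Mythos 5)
Your proof is correct and follows exactly the route the paper intends: the paper derives Corollary~\ref{cor_condition} precisely from Proposition~\ref{prop_2} (perfectness rules out the Figure~\ref{fig_1} tilings, so an $E$-configuration must exist) together with the iteration in the proof of Lemma~\ref{lem_2} (applicable because incongruent tiles cannot share a side), which yields strictly decreasing lengths since $\DIAM(T_2)>0$.
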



\bibliographystyle{plain}


\end{document}